\documentclass[11pt]{amsart}

\usepackage[T1]{fontenc}
\usepackage[utf8]{inputenc}
\usepackage{lmodern}
\usepackage{amsmath,amssymb,amsthm,mathtools}
\usepackage[margin=0.9in]{geometry}
\usepackage{microtype}
\usepackage{xcolor}
\usepackage{enumitem}
\usepackage[colorlinks=true,linkcolor=blue!55!black,
            citecolor=blue!55!black,urlcolor=blue!55!black]{hyperref}

\newtheorem{theorem}{Theorem}[section]
\newtheorem{proposition}[theorem]{Proposition}
\newtheorem{lemma}[theorem]{Lemma}
\newtheorem{corollary}[theorem]{Corollary}
\theoremstyle{definition}
\newtheorem{definition}[theorem]{Definition}
\theoremstyle{remark}
\newtheorem{remark}[theorem]{Remark}

\DeclareMathOperator{\Ric}{Ric}
\DeclareMathOperator{\Scal}{Scal}
\DeclareMathOperator{\Hess}{Hess}
\DeclareMathOperator{\Vol}{Vol}
\DeclareMathOperator{\diam}{diam}

\newcommand{\R}{\mathbb R}
\newcommand{\eps}{\varepsilon}
\newcommand{\Rm}{\operatorname{Rm}}
\newcommand{\Cint}{\mathcal C}

\newcommand{\Ex}{\mathcal E}

\title{Universal Volume Growth Bounds from Positive\\
Intermediate Curvature}
\author{Gioacchino Antonelli}
\address[Gioacchino Antonelli]{Department of Mathematics, University of Notre Dame, Hurley Hall, 255 Hurley, Notre Dame, IN 46556, United States}
\email{gantonel@nd.edu}
\date{\today}

\begin{document}

\begin{abstract}
Let $n,m$ be integers such that $n\geq2$ and
$0\leq m\leq n-2$. Let $\mathcal C_{m+1}$ denote the
$(m+1)$-intermediate curvature introduced by
Brendle--Hirsch--Johne. 

We prove that there are constants
$\nu(n,m),C(n,m)>0$ such that the following holds. If $(M^n,g)$ is complete and
connected and, for $\delta\geq 0$,
\[
 \Ric\geq-\delta^2,
 \qquad
 \mathcal C_{m+1}\geq 1,
\]
then
\[
 \delta R\leq\nu(n,m)
 \quad\Longrightarrow\quad
 \Vol B_R(p)
 \leq C(n,m)R^m \quad \text{for every $p\in M$ and $R>0$.}
\]

In particular, taking $m=n-2$ and $\delta=0$ gives Gromov's conjectured codimension-two volume
growth estimate under $\Ric \geq0$ and $\Scal \geq1$.
\end{abstract}
\maketitle

\section{Introduction}

In \cite[Definition 1.1]{BrendleHirschJohne}, Brendle--Hirsch--Johne defined and studied notions of intermediate curvatures interpolating between $\mathrm{Ric}$ and $\mathrm{Scal}$ on a Riemannian manifold. 
\begin{definition}[{\cite[Definition 1.1]{BrendleHirschJohne}}]
    Let $(M^n,g)$ be a Riemannian manifold, and let $r$ be an integer such that $1\leq r\leq n-1$. For an $r$-plane $W\subset T_xM$ choose an orthonormal basis $\{e_1,\ldots,e_r\}$ of $W$ and extend it to an orthonormal basis $\{e_1,\ldots,e_r,e_{r+1},\ldots,e_n\}$ of $T_xM$. The \textit{$r$-intermediate curvature} of $W$ is
    \begin{equation}\label{eq:C-definition}
       \Cint_r(W):=\sum_{a=1}^{r}\sum_{b=a+1}^{n}\mathrm{Sect}(e_a\wedge e_b).
\end{equation}
Let $k\in \mathbb R$. When we write $\mathcal{C}_r \geq k$ we mean that $\mathcal{C}_r(W)\geq k$ for every $r$-plane $W\subset T_xM$ and every $x\in M$.
\end{definition}
Notice that $\Cint_1=\Ric$, $\Cint_2=\operatorname{biRic}$ (the biRicci curvature was introduced and studied in \cite{ShenYe}), and
$\Cint_{n-1}=\Scal/2$. The aim of this note is to prove the following volume growth result assuming a Ricci curvature lower bound and positive intermediate curvature. 
\begin{theorem}
\label{thm:main}
Let $n,m$ be integers such that $n\geq2$ and $0\leq m\leq n-2$. There are constants $\nu(n,m)>0$ and $C(n,m)>0$ such that the following holds.

Let $(M^n,g)$ be a complete connected Riemannian manifold, and let $\delta\geq 0$, $\sigma> 0$. Assume
\[
              \Ric\geq -\delta^2,
       \qquad \mathcal{C}_{m+1}\geq\sigma^2.
\]
Then, for every $p\in M$ and every $R>0$,
\begin{equation}\label{eq:main-volume}
       \delta R \leq \nu(n,m) \quad \Longrightarrow \quad \Vol B_R(p)
       \leq C(n,m)\sigma^{-(n-m)}R^m.
\end{equation}
\end{theorem}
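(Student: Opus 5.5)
By scaling, replacing $g$ by $\sigma^2 g$ turns $\mathcal C_{m+1}\geq\sigma^2$ into $\mathcal C_{m+1}\geq 1$. It also turns $\Ric\geq-\delta^2$ into $\Ric\geq-(\delta/\sigma)^2$ and $R$ into $\sigma R$. Since $\delta R$ is scale invariant, it suffices to treat $\sigma=1$.

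The plan is to combine two ingredients. The first is a \emph{waist/width} inequality coming from positive $(m+1)$-intermediate curvature. Following the $\mu$-bubble technique of Gromov and Chodosh--Li, as extended to intermediate curvature by Brendle--Hirsch--Johne and subsequent works, one shows that $M$ is \emph{macroscopically $m$-dimensional at scale $\sim 1$}. Concretely, there is a continuous map $f\colon M\to X^m$ to an $m$-dimensional simplicial complex whose fibers $f^{-1}(y)$ have diameter at most $D(n,m)$. I would build this by an inductive slicing. Take the distance function from $p$ and perform a $\mu$-bubble construction on annuli of width $\sim 1$. This produces hypersurfaces $\Sigma^{n-1}$ that satisfy a stability inequality with a spectral $\mathcal C_{m}$-type positivity, because $\mathcal C_{m+1}$ positivity descends to $\mathcal C_m$ positivity on stable slices (the Brendle--Hirsch--Johne mechanism). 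Iterating $m$ times reaches Ricci-type spectral positivity in codimension $m$, which bounds the diameter of the final slices. I expect this step to be the main obstacle. Obtaining a \emph{uniform} fiber-diameter bound requires a warped/spectral version of the Brendle--Hirsch--Johne descent, and I would first try the spectral Bonnet--Myers diameter estimate at the last step. The lower Ricci bound should be unnecessary here, although in weaker regimes it may help control the regularity of the level sets.

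The second ingredient converts the fiber-diameter bound into volume growth, and this is where $\Ric\geq-\delta^2$ with $\delta R\leq\nu$ enters. Cover $B_R(p)$ by a maximal $1$-separated net $\{x_i\}$. By Bishop--Gromov with $\delta R\le\nu$, every unit ball satisfies $\Vol B_1(x_i)\le C(n)$, since the comparison space is nearly Euclidean at scales up to $R$. This gives $\Vol B_R(p)\le C(n)\,N$, where $N$ is the number of net points. It then remains to show $N\le C R^m$, i.e.\ that the $1$-packing number of $B_R(p)$ grows like $R^m$. For that, pull back a cover of $f(B_R(p))\subset X$, which lies in a region of $X$ of size $\sim R$ and so can be covered by $\lesssim R^m$ pieces of size $1$ if $f$ is also chosen $C$-Lipschitz. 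Each preimage has diameter $\lesssim D+C$ and hence contains boundedly many net points.

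This reduces everything to producing $f$ that is simultaneously Lipschitz with fibers of bounded diameter. To obtain it, I would use the nerve of a cover of $M$ by the bounded-diameter "cells" from the slicing, which has multiplicity $\le m+1$, together with a partition of unity subordinate to it. The required bounded-geometry input, namely a bounded number of overlaps per unit scale, again follows from Bishop--Gromov under $\delta R\le\nu$. Alternatively, if the slicing yields only a width estimate without Lipschitz control, I would instead bound $N$ directly. Apply a coarea/induction argument along the distance function, in which each distance sphere of radius $r\le R$ has $1$-packing number $\lesssim r^{m-1}$ by the same argument applied to the $(m-1)$-dimensional slices, and then sum over $r$.
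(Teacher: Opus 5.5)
There is a genuine gap, and it lies in your first ingredient. That step is not technical; it is an open problem. A continuous map $f\colon M\to X^m$ with fibers of diameter at most $D(n,m)$ under $\mathcal C_{m+1}\geq 1$ alone (you even expect the Ricci bound to be unnecessary) is, for $m=n-2$, exactly Gromov's conjecture $\operatorname{UW}_{n-2}(M)\leq C(n)$ for $\Scal\geq1$. This conjecture is open for $n\geq 4$. The paper stresses that, even under $\Ric\geq0$, a bound on $\operatorname{UW}_m$ is unknown in dimension $n\geq4$ without a noncollapsing hypothesis. It obtains width only \emph{as a consequence} of the volume bound (Theorem~\ref{cor:noncollapsed-urysohn-width}, via Papasoglu's theorem), i.e.\ in the direction opposite to yours. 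Your $\mu$-bubble sketch does not close this, for three reasons. First, $\mu$-bubbles and stable weighted slicings are only known to be smooth for $n\leq 7$, whereas the theorem holds in every dimension. Second, the Brendle--Hirsch--Johne descent produces one nested chain $\Sigma_1\supset\cdots\supset\Sigma_m$ and serves as a global obstruction on closed manifolds of the form $N\times T^m$; it does not decompose all of $M$ into bounded cells with $m$-dimensional nerve. Third, a diameter bound on the last slice does not by itself assemble into $\operatorname{UW}_m(M)\leq D$.

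Even granting such an $f$, the conversion to volume growth fails as written. Lipschitz continuity, bounded fibers, $\dim X=m$, and the doubling you get from Bishop--Gromov do not yield an $R^m$ packing bound, because topological dimension does not control growth. Your claim that $f(B_R(p))$ is covered by $\lesssim R^m$ unit pieces of $X$ is false already for $m=1$. A thickened Sierpi\'nski gasket graph is doubling, admits a Lipschitz map onto a graph with fibers of bounded diameter, and has volume growth $\asymp R^{\log_2 3}$. Doubling alone gives only $N\lesssim R^n$. Your fallback, an $r^{m-1}$ packing bound for every distance sphere, has no curvature input, since distance spheres are not $\mu$-bubble slices. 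Summed over $r$, it already contains the theorem. In the paper such a sphere bound is \emph{derived from} the volume estimate, for one sphere obtained by averaging, in the proof of Theorem~\ref{cor:noncollapsed-urysohn-width}.

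What is missing is a mechanism that produces the exponent $m$. The paper gets it from Cheeger--Colding theory alone. A rank-improvement step passes from a $(k,\delta)$-split ball to a $(k+1,\eps)$-split ball, losing only a constant factor in $\Vol B_s(q)/s^m$. It uses Lemma~\ref{lem:finite-segment} if the residual factor contains a long segment, and otherwise Kapovitch--Wilking-type critical scales $t_j\leq 1$ with $\sum_j t_j^m\leq\sum_j t_j^k\leq C$ for $k\leq m$. After $m+1$ steps, a Bochner argument applies to $\beta^1\wedge\cdots\wedge\beta^{m+1}$ and to the forms $\beta^a$ built from harmonic splitting maps. Via \eqref{eq:mixed-trace}, it forces the final radius to be $\lesssim\sigma^{-1}$, and Bishop's bound $\Vol B_s\leq Cs^n$ then finishes the proof.
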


Taking $m=n-2$, $\delta=0$, and $\sigma=\frac{\sqrt 2}{2}$ in Theorem \ref{thm:main} we deduce the following.

\begin{corollary}\label{thm:scalar-intro}
Let $n\geq2$, and let $(M^n,g)$ be a complete connected Riemannian manifold
satisfying
\[
                         \Ric\geq0,
              \qquad     \Scal\geq1.
\]
There is a constant $C=C(n)>0$ such that
\[
        \Vol B_R(p)\leq C(n) R^{n-2}
        \qquad\text{for every }p\in M\text{ and every }R>0.
\]
\end{corollary}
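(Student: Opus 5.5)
The corollary follows directly from Theorem~\ref{thm:main}; the only work is to check that its hypotheses hold with the stated choices of $m$, $\delta$ and $\sigma$. We take $m=n-2$, which is admissible since $n\geq2$ gives $0\leq n-2\leq n-2$. Then $m+1=n-1$, and the curvature condition becomes $\mathcal{C}_{n-1}\geq\sigma^2$. As recorded after the definition, $\mathcal{C}_{n-1}=\Scal/2$. Indeed, for an $(n-1)$-plane $W$ the double sum in \eqref{eq:C-definition} runs over all pairs $a<b$ with $a\leq n-1$. Since $b\leq n$ forces $a\leq n-1$ automatically, this is every pair $a<b$, so the sum equals $\tfrac12\Scal$, independently of $W$. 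Hence $\Scal\geq1$ is equivalent to $\mathcal{C}_{n-1}\geq\tfrac12=\bigl(\tfrac{\sqrt2}{2}\bigr)^2$, and we set $\sigma=\tfrac{\sqrt2}{2}$.

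For the Ricci condition we take $\delta=0$, so the hypothesis $\Ric\geq-\delta^2$ is exactly $\Ric\geq0$. With $\delta=0$ the smallness condition $\delta R=0\leq\nu(n,n-2)$ holds for every $R>0$, because $\nu>0$. Theorem~\ref{thm:main} then gives, for every $p\in M$ and every $R>0$,
\[
\Vol B_R(p)\leq C(n,n-2)\,\bigl(\tfrac{\sqrt2}{2}\bigr)^{-2}R^{n-2}=2\,C(n,n-2)\,R^{n-2}.
\]
Setting $C(n):=2C(n,n-2)$ proves the corollary.

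There is no real obstacle here. The only point needing care is the identity $\mathcal{C}_{n-1}=\Scal/2$, checked above by counting pairs. It is also worth noting that $\delta=0$ removes any restriction on $R$.
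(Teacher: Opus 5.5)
Your proposal is correct and follows the paper's own derivation: the paper obtains the corollary from Theorem~\ref{thm:main} with $m=n-2$, $\delta=0$, $\sigma=\frac{\sqrt2}{2}$, using $\mathcal{C}_{n-1}=\Scal/2$. Your pair-counting check of that identity, and your remark that $\delta=0$ removes any restriction on $R$, are the only points that need verifying.
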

Corollary \ref{thm:scalar-intro} answers in the affirmative a question posed by Gromov in \cite[\S2.A(b)]{GromovLarge}. 
\smallskip

Theorem \ref{thm:main} can be used to bound the $m$-dimensional Urysohn width of $(M^n,g)$. Let $m\geq 0$ be an integer. The \textit{$m$-dimensional Urysohn width} of a metric space $X$ is the infimum, over all continuous maps $f\colon X\to P$
to simplicial complexes of dimension at most $m$, of $\sup_{y\in P}\operatorname{diam}\bigl(f^{-1}(y)\bigr)$. Gromov asked whether every complete
$n$-dimensional Riemannian manifold with $\Scal\geq\sigma^2>0$ satisfies $\operatorname{UW}_{n-2}(M)\leq C(n)\sigma^{-1}$;
see, e.g., \cite[\S~2.A(c)]{GromovLarge}.  The
three-dimensional case of this conjecture is known after \cite[Theorem~1.1]{LiokumovichMaximo}, and \cite[Theorem~1.1]{LiokumovichWang}. The analogous macroscopic statement, with positive macroscopic scalar
curvature (see \cite{GuthMacroscopic}) in place of a pointwise scalar-curvature lower bound, is false
in every dimension $n\geq4$, as recently shown by Kumar and Sen
\cite[Theorem~A]{KumarSen}.

Combining Theorem \ref{thm:main} with \cite[Theorem~3.3]{Papasoglu} (see also \cite[Theorem 1.6]{LiokumovichLishakNabutovskyRotman} and the related papers \cite{GuthLargeBalls, GuthUrysohnWidth, NabutovskyWidth}) gives the following
noncollapsed form of the conjectured width-estimate for all the intermediate
curvatures considered here, under an almost non-negative Ricci lower bound. 

\begin{theorem}
\label{cor:noncollapsed-urysohn-width}
Let $n\geq2$, $0\leq m\leq n-2$, and $v_0>0$.  There exist constants
$C=C(n,m,v_0)>0$ and $0<\delta=\delta(n,m,v_0)<1$ with the following property.  If $(M^n,g)$ is a
complete connected Riemannian manifold satisfying
\[
 \Ric\geq -\delta^2,\qquad
 \mathcal{C}_{m+1}\geq1,\qquad
 \inf_{p\in M}\Vol B_1(p)\geq v_0,
\]
then $\operatorname{UW}_m(M,g)\leq C$.
In particular, since $\mathcal{C}_{n-1}=\Scal/2$, we obtain
\[
 \Ric\geq -\delta(n,v_0)^2,\qquad \Scal\geq1,
 \qquad \inf_{p\in M}\Vol B_1(p)\geq v_0
 \quad\Longrightarrow\quad
 \operatorname{UW}_{n-2}(M,g)\leq C(n,v_0).
\]
\end{theorem}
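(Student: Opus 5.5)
We plan to derive Theorem \ref{cor:noncollapsed-urysohn-width} from Theorem \ref{thm:main} together with Papasoglu's criterion \cite[Theorem~3.3]{Papasoglu}. In the form we use, that criterion states: for integers $m\geq 0$, $n$ and a constant $V$, there is $\Lambda=\Lambda(n,V)$ such that if a closed or complete $n$-manifold (in Papasoglu's generality, a complete Riemannian manifold) satisfies $\Vol B_r(x)\leq V r^{m}$ for all $x$ and all $r$ at a fixed large scale $r=R_0$ (up to dimensional constants), then $\operatorname{UW}_{m}(M)\lesssim R_0$. More precisely, the version in \cite[Theorem 1.6]{LiokumovichLishakNabutovskyRotman} says: if every ball of radius $R_0$ in $M^n$ has volume at most $\eps(n) R_0^{m+1}$, then $\operatorname{UW}_m(M)\leq R_0$. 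So the task is to produce a radius $R_0=R_0(n,m,v_0)$ at which the volume bound of Theorem \ref{thm:main} beats $\eps(n)R_0^{m+1}$.

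\textbf{Step 1 (choice of scale).} Theorem \ref{thm:main} with $\sigma=1$ gives $\Vol B_R(p)\leq C(n,m)R^m$ whenever $\delta R\leq \nu(n,m)$. We want $C(n,m)R_0^m\leq \eps(n)R_0^{m+1}$, i.e.\ $R_0\geq C(n,m)/\eps(n)$. Fix $R_0:=\max\{1, C(n,m)/\eps(n)\}$, which depends only on $n,m$. Then choose $\delta:=\min\{\tfrac12,\nu(n,m)/R_0\}$, so that $\delta R_0\leq\nu(n,m)$ and the hypothesis of Theorem \ref{thm:main} holds at radius $R_0$. This yields $\Vol B_{R_0}(p)\leq \eps(n)R_0^{m+1}$ for every $p$, and the width criterion gives $\operatorname{UW}_m(M)\leq R_0$.

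\textbf{Step 2 (role of $v_0$ and fit with the cited criterion).} In the version above the noncollapsing hypothesis is not even needed. The dependence on $v_0$ enters if the form of \cite[Theorem~3.3]{Papasoglu} one invokes is stated for manifolds with bounded local geometry or requires a lower volume bound on unit balls (e.g.\ to run a covering/nerve argument with controlled multiplicity). In that case we use Bishop--Gromov under $\Ric\geq-\delta^2$ with $\delta<1$: a maximal $1$-separated net has multiplicity bounded by $N(n)$ at scale comparable to $R_0$, with constants depending on $n$ and on $v_0$ through the ratio $\Vol B_{R_0}/\Vol B_{1/2}\geq v_0$-type bounds. Feeding these into the criterion produces constants depending on $(n,m,v_0)$. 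The final claim for $\Scal$ is then the case $m=n-2$, since $\Cint_{n-1}=\Scal/2\geq 1/2$ and we rescale $\sigma$ accordingly.

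\textbf{Main obstacle.} The main difficulty is matching hypotheses exactly with the cited width theorem: ensuring that a uniform volume bound at a single scale $R_0$ (rather than at all scales) suffices, and that noncompactness of $M$ is permitted. If the cited result needs the bound at all scales $r\leq R_0$, we would handle small radii via Bishop--Gromov, using $\Vol B_r(p)\leq C(n)r^n\leq \eps r^{m+1}$ for $r$ small, and Theorem \ref{thm:main} for $r\leq R_0$ otherwise. Balancing the intermediate range is where the volume lower bound $v_0$ would be used.
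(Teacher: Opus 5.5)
There is a genuine gap, and it is in the width criterion you quote. The Guth, Papasoglu and Liokumovich--Lishak--Nabutovsky--Rotman criteria compare volume with $R^n$ only in the top-dimensional case $\operatorname{UW}_{n-1}$. For $\operatorname{UW}_m$ with $m+1<n$ the hypothesis is not on the $n$-dimensional volume. It is a bound on the $(m+1)$-dimensional Hausdorff content of $R$-balls, or, in Papasoglu's Theorem~3.3 as the paper uses it, on $\operatorname{HC}_m(\partial U_x)$ for open sets $B_r(x)\subset U_x\subset B_{10r}(x)$.

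The statement ``$\Vol B_{R_0}(x)\leq\eps R_0^{m+1}$ for all $x$ implies $\operatorname{UW}_m\leq R_0$'' is false. Take the flat manifold $\R^{m+1}\times T^{n-m-1}_\rho$ with $\rho$ small. For $R\gg\rho$ one has $\Vol B_R(x)\approx\omega_{m+1}(2\pi\rho)^{n-m-1}R^{m+1}$, an arbitrarily small multiple of $R^{m+1}$. Yet $\operatorname{UW}_m=\infty$, because $\R^{m+1}\times\{t\}$ is an isometrically embedded copy of $\R^{m+1}$. So your Step~1 proves nothing. Your claim in Step~2 that noncollapsing ``is not even needed'' is also wrong. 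If it were right, it would settle the question the paper leaves open for $n\geq4$: whether $\inf_p\Vol B_1(p)\geq v_0$ can be dropped.

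The missing step is precisely where $v_0$ enters: turning the volume bound of Theorem~\ref{thm:main} into a covering bound.
\begin{itemize}
\item Since $\Ric\geq-\delta^2$ with $\delta<1$, Bishop--Gromov gives $\Vol B_{1/2}(y)\geq c(n)v_0$ for every $y$.
\item Take a maximal $1$-separated set in $B_R(x)$. Its disjoint $1/2$-balls lie in $B_{R+1}(x)$, which has volume at most $C(n,m)(R+1)^m$ once $\delta(R+1)\leq\nu(n,m)$. So the set has at most $C(n,m)(R+1)^m/(c(n)v_0)$ points.
\item The unit balls around these points cover $B_R(x)$. Hence $\operatorname{HC}_{m+1}(B_R(x))\lesssim R^m/v_0$, which is at most $\eps_{m+1}R^{m+1}$ for $R=R(n,m,v_0)$ large.
\item A Hausdorff-content form of the criterion then gives $\operatorname{UW}_m\leq R$. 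Note that $\delta$ must be chosen after $R$, so it depends on $v_0$.
\end{itemize}

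The paper runs the same packing argument, but in the annulus $\overline B_{2r}(x)\setminus B_r(x)$. It then pigeonholes over the integer distance spheres $\partial B_j(x)$, $r<j<2r$, to find one with $\operatorname{HC}_m(\partial B_j(x))\lesssim r^{m-1}\leq\eps_{m+1}r^m$. Papasoglu's boundary criterion then applies with $U_x=B_j(x)$. The case $m=0$ is handled separately by Bonnet--Myers.

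Your remark in Step~2 about nets of bounded multiplicity points in this direction. But it must be the core of the argument, not an optional adjustment to fit the hypotheses of the cited theorem.
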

Even when $\mathrm{Ric}\geq 0$, up to the author's knowledge, it is unknown whether the assumption $\inf_{p\in M}\Vol B_1(p)\geq v_0$ in Theorem \ref{cor:noncollapsed-urysohn-width} can be dropped in dimension $n\geq 4$. 
\smallskip

Finally, we mention a consequence of our results for rationally essential manifolds. The aspherical
positive-scalar-curvature conjecture predicts that a closed aspherical
manifold admits no metric of positive scalar curvature (it is solved for dimension $n\leq 5$, see \cite{ChodoshLiAspherical, GromovAspherical5}), while its stronger
rational-essential version replaces asphericity with the
condition \eqref{eqn:RatEs}, see, e.g., \cite[Section 3.2]{GromovFourLectures} (Note that every closed oriented
aspherical manifold is rationally essential).
\begin{definition}
Let $M^n$ be a closed connected oriented manifold, let
$\Gamma=\pi_1(M)$, and let $c_M\colon M\to B\Gamma$ be a classifying
map.  We say that $M$ is \emph{rationally essential} if
\begin{equation}\label{eqn:RatEs}
                     (c_M)_*[M]_{\mathbb Q}\neq 0
                     \quad\text{in }H_n(B\Gamma;\mathbb Q).
\end{equation}
\end{definition}

If $M$ is closed with $\mathrm{Ric}\geq 0$, Cheeger--Gromoll \cite{CheegerGromoll} gives $\widetilde M=N^{n-k}\times\mathbb R^k$, where $N$ is simply connected and compact, $0\leq k\leq n$, and $\pi_1(M)$ contains a finite-index subgroup $\cong \mathbb Z^k$. Thus if $M$ is rationally essential, we must have $k=n$, hence $\widetilde M=\mathbb R^n$,
and thus $M$ is flat.  The point of the following theorem is that, under a dimensionally controlled negative Ricci lower bound, being rationally essential rules out having a positive lower bound on any intermediate curvature; in particular it rules out having a positive lower bound on $\mathrm{Scal}$. The proof of Theorem \ref{prop:rationally-inessential} is a direct consequence of Theorem \ref{thm:main} and \cite[Theorem 1.3]{BraunSauer2021}. For the conclusion on the simplicial volume, compare with \cite[Section 3.A.]{GromovLarge}. For a related result, see \cite[Corollary 2.7]{Lott2025}.
\begin{theorem}\label{prop:rationally-inessential}
Let $n\geq2$ and $0\leq m\leq n-2$.  There exists a constant
$\varepsilon=\varepsilon(n,m)>0$ with the following property.  If
$(M^n,g)$ is closed, connected, and oriented, and 
\[
        \Ric\geq-\varepsilon^2,
        \qquad \mathcal C_{m+1}\geq 1,
\]
then $M$ is not rationally essential.  Hence, $M$ is not
aspherical and its simplicial volume vanishes.
\end{theorem}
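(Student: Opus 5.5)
We prove Theorem \ref{prop:rationally-inessential} by passing to the universal cover and combining the volume growth of Theorem \ref{thm:main} with the Braun--Sauer criterion \cite[Theorem 1.3]{BraunSauer2021}. As we use it, that result says: if $M^n$ is closed and rationally essential, then for any Riemannian metric on $M$ the universal cover $\widetilde M$ has, for some $c>0$ depending only on $n$ (after a normalization of the local geometry), balls of arbitrarily large radius with volume at least $c\,R^n$. More precisely, one may use their formulation that a rationally essential manifold cannot have universal cover in which every ball of radius $R$ has volume $\leq \eps_n R^n$ for all (or at least one suitably large) $R$, where $\eps_n$ depends only on $n$ \emph{and} on a uniform local volume bound. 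I would check the exact hypothesis in \cite{BraunSauer2021} and adapt the constants accordingly.

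\textbf{Step 1: lift to the universal cover.} Let $\widetilde M$ carry the lifted metric $\tilde g$. It is complete and connected, and the curvature conditions $\Ric\geq-\eps^2$ and $\Cint_{m+1}\geq1$ are local, so they hold on $\widetilde M$.

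\textbf{Step 2: apply Theorem \ref{thm:main}.} Take $\delta=\eps$, $\sigma=1$ and $R=\nu(n,m)/\eps$. Then for every $\tilde p\in\widetilde M$,
\[
\Vol B_R(\tilde p)\leq C(n,m)R^m = C(n,m)\,\nu^m\eps^{-m}.
\]
Since $m\leq n-2$, the ratio satisfies
\[
\frac{\Vol B_R}{R^n}\leq C\nu^{m-n}\eps^{n-m}\leq C\nu^{m-n}\eps^{2},
\]
which tends to $0$ as $\eps\to0$. So for $\eps(n,m)$ small, every ball of radius $R$ in $\widetilde M$ has volume less than $\eps_n R^n$.

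\textbf{Step 3: conclude.} This contradicts Braun--Sauer, so $M$ is not rationally essential. For the remaining claims: a closed oriented aspherical manifold is rationally essential, so $M$ is not aspherical. Moreover, if $(c_M)_*[M]_{\mathbb Q}=0$, Gromov's mapping theorem gives vanishing simplicial volume, since $\|M\|$ depends only on $(c_M)_*[M]$ in $H_n(B\Gamma;\mathbb R)$, and that class is $0$.

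\textbf{Main obstacle.} The delicate step is matching the exact form of \cite[Theorem 1.3]{BraunSauer2021}. If their criterion is scale-invariant, requiring volume $\leq\eps_n R^n$ for all balls of a single radius $R$, Step 2 suffices directly. If instead it requires the bound for all $R$ beyond some scale, we note that Theorem \ref{thm:main} gives it only up to $R\sim\nu/\eps$. In that case I would rescale the metric by $\lambda$ so that the relevant scale falls in the range $[0,\nu/(\lambda\eps)]$. Under rescaling, $\Ric\geq -\eps^2/\lambda^2$ and $\Cint_{m+1}\geq\lambda^{-2}$, so $\sigma=\lambda^{-1}$ and the product $\delta R$ is invariant. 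The estimate $\Vol\leq C\sigma^{-(n-m)}R^m$ is then scale-consistent, and the normalized ratio $\Vol B_R/R^n\leq C\nu^{m-n}\eps^{n-m}$ is scale-invariant at the critical radius. This is exactly the smallness input a Guth/Braun--Sauer-type theorem needs.
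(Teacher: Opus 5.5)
Your proof is correct and follows the paper's route. You lift to the universal cover, apply Theorem \ref{thm:main} at a radius with $\varepsilon R\le\nu$ and $C(n,m)R^m<c(n)R^n$, and contradict \cite[Theorem~1.3]{BraunSauer2021}; you take $R=\nu/\varepsilon$, whereas the paper fixes $R=L(n,m)$ and then shrinks $\varepsilon$. You then finish, as the paper does, with asphericity and Gromov's mapping theorem. The form of Braun--Sauer the paper uses is exactly your scale-invariant case: for every $R>0$ some ball of radius $R$ in $\widetilde M$ has volume $>c(n)R^n$, with $c(n)$ depending only on $n$ and no local volume normalization. So your Step 2 suffices as written, and the hedging in your ``Main obstacle'' paragraph is unnecessary.
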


\subsection*{History of Corollary \ref{thm:scalar-intro} and previous results}

Recently, there have been many contributions by different authors related to Corollary \ref{thm:scalar-intro}. We list some of them:
\begin{enumerate}[label=\textup{(\arabic*)},leftmargin=2.2em]
\item Under the stronger assumption $\operatorname{Sec}\geq0$, Corollary \ref{thm:scalar-intro} follows from Petrunin's local upper bound for the
scalar-curvature integral \cite{Petrunin};
\item B.~Zhu stated the linear volume growth conclusion of Corollary \ref{thm:scalar-intro} in dimension three under a uniform lower bound for the volumes of unit balls in \cite[Theorem~1.8]{BZhu}.  In $n$ dimensions he stated an $O(R^{n-1})$ estimate under the same noncollapsing assumption and an $O(R^{n-2})$ estimate under a uniform injectivity-radius lower bound \cite[Corollary~1.9]{BZhu};
\item From this item on, in this list, assume $n=3$ and the assumptions of Corollary \ref{thm:scalar-intro}. Munteanu--Wang established the linear-growth estimate with a
      universal constant and also considered scalar-curvature lower
      bounds decaying at infinity \cite{MunteanuWang};
\item Chodosh--Li--Stryker subsequently gave a different proof of the result in \cite{MunteanuWang} and
      extended the power-law decay range, with a constant $C$ depending on
      the manifold and basepoint \cite{ChodoshLiStryker};
\item Wei--Xu--Zhang obtained the sharp upper bound for
      $\limsup_{R\to\infty}\Vol B_R(p)/R$ \cite{WeiXuZhang};
\item Y.~Wang proved an effective local volume upper bound
      \cite{YWang};
\item Huang--Liu obtained sharp asymptotic estimates in the
      asymptotically nonnegative-Ricci setting, under some decay assumptions \cite{HuangLiu}.
\end{enumerate}

In the Ricci-limit setting, Wang--Xie--B.~Zhu--X.~Zhu
\cite{WangXieZhuZhu} excluded an $\R^{n-1}$-splitting for noncollapsed
limits of complete $n$-dimensional Riemannian manifolds with $\Ric\geq0$ and
$\Scal\geq1$; see their Theorem~1.1(1) and the earlier result
\cite{BZhuXZhu}.  In \cite{WangXieZhuZhu} they
also proved
$
       \inf_{p\in M}\Vol B_r(p)\leq c(n)r^{n-2}$ 
under $\Ric\geq0$ and $\Scal\geq1$; see
\cite[Theorem~1.9]{WangXieZhuZhu}.

X.~Zhu obtained related results in \cite{XZhu}.  For example, he proved that asymptotic cones of manifolds with
$\Ric\geq0$, $\Scal\geq1$, and
$\inf_{p\in M}\Vol B_1(p)>0$ have essential dimension at most
$n-2$; see \cite[Theorem 1.6]{XZhu}. Note that Corollary \ref{cor:bounded-splitting} below allows one to remove the noncollapsed
assumption from both \cite[Theorem 1.6]{XZhu} and \cite[Theorem 1.1(1)]{WangXieZhuZhu}.
\vspace{0.1cm}

As
discussed in \cite[Remark~1.5]{WangXieZhuZhu},
for a noncompact $M$ satisfying the assumptions of Corollary \ref{thm:scalar-intro}, we can use Corollary \ref{thm:scalar-intro} to show
$b_1(M)\leq n-3$ (thus removing the noncollapsing hypothesis from the inequality on $b_1$ in \cite[Theorem 1.3]{XZhu}). The estimate on the first Betti number also has a rigidity statement. Let $(M^n,g)$ be a noncompact Riemannian manifold as in the assumptions of Corollary \ref{thm:scalar-intro}.  By \cite{AndersonTopology} and Corollary \ref{thm:scalar-intro} applied to the universal cover of $M$ we get that if $b_1(M)=n-3$, then $M$ has at most linear volume
growth (and thus exactly linear volume growth by Calabi--Yau lower volume bound).  It then follows from
\cite[Theorem~A and Remark~1.2(2)]{HuangHuangSlowGrowth} that
$\widetilde M$ splits off isometrically an
$\mathbb R^{n-3}$-factor. This answers the question raised in
\cite[Remark~5.4]{XZhu} without a uniform noncollapsing assumption. Similar results hold for positive lower bounds on arbitrary intermediate curvatures.
\vspace{0.1cm}

Let us comment further on Theorem \ref{thm:main}.  BiRicci
curvature (corresponding to the case $m=1$ in Theorem \ref{thm:main}) was introduced by Shen--Ye \cite{ShenYe}.
In \cite[Theorem 3(1)]{AntonelliXu} we proved linear volume growth in dimensions $3\leq n\leq 5$ under $\mathrm{Ric}\geq 0$ and a uniformly positive spectral biRicci condition outside a compact set; when the spectral parameter is zero, this includes a pointwise biRicci lower bound.  Zhou--Zhu \cite{ZhouZhu} later extended the result to $3\leq n\leq 7$. Note that Theorem \ref{thm:main} shows that $\mathrm{Ric}\geq 0$ and $\mathrm{biRic}\geq 1$ imply that $M$ has linear volume growth (and then bounded isoperimetric profile), thus answering the global pointwise version of the first bullet of \cite[Question 1]{AntonelliXu} in the negative in every dimension $n\geq 3$. 
\vspace{0.1cm}

{
Corollary \ref{thm:scalar-intro} is closely related to a more general problem posed by Yau \cite{YauProblems}, which asks whether every
complete noncompact $n$-dimensional Riemannian manifold with $\Ric\geq0$ satisfies
\[
       \limsup_{r\to\infty} r^{2-n}
       \int_{B_r(p)}\Scal\,dV <\infty.
\]
Under the additional assumption $\Scal\geq1$, a solution of Yau's conjecture would immediately
imply the volume growth estimate in Corollary \ref{thm:scalar-intro}. Up to the author's knowledge, Yau's conjecture remains open in every dimension $n\geq 3$. For recent progress towards this problem see, e.g., \cite{CucinottaMondino, MunteanuWangIntegral}.
}

\subsection*{Ideas of the proof}

The proof of Theorem \ref{thm:main} has two main ingredients. First, a ball which is quantitatively
close to splitting $m+1$ Euclidean directions has radius at most the
curvature scale; this is Corollary \ref{cor:bounded-splitting}.  Moreover, if a ball is close to
split only $k\leq m$ directions, Proposition \ref{prop:rank-improvement}
finds another ball which is close to split $k+1$ directions while losing only a
harmless multiplicative factor in the $m$-volume ratio. Hence, starting at rank $k=0$
and iterating finitely many times proves the theorem.

The proof of Proposition \ref{prop:rank-improvement} uses tools contained in the proof of
Kapovitch--Wilking rescaling theorem
\cite[Theorem~5.1]{KapovitchWilking}.
We are also inspired by the result in \cite[Lemma~2.12]{Jansen}, which contains a closely related finite formulation of \cite[Theorem 5.1]{KapovitchWilking}; compare with our Proposition \ref{prop:critical-package}. In order to prove Proposition \ref{prop:critical-package} we do not need to use the full strength of \cite[Theorem 5.1]{KapovitchWilking}; see the paragraph before the statement of Proposition \ref{prop:critical-package}. We refer the reader to the beginning of the proof of Proposition \ref{prop:rank-improvement} for an explanation of its proof.
\vspace{0.1cm}

The Hodge
obstruction and the rank-improvement mechanism described above are inspired by
the arguments in \cite{CucinottaMondino, WangXieZhuZhu, XZhu}; in particular, the
Kapovitch--Wilking rescaling theorem
\cite[Theorem~5.1]{KapovitchWilking} is also a
central input in the proof of \cite[Theorem~1.6]{XZhu}.

Moreover, at least two closely related uses of harmonic splitting maps and the Bochner formula precede the argument of Proposition \ref{prop:hodge-bound} and Corollary \ref{cor:bounded-splitting}.  In \cite[Remark~3.1]{WangXieZhuZhu}, the authors give an alternative proof of \cite[Theorem 1.1(1)\&(2)]{WangXieZhuZhu} using harmonic splitting maps with computations similar to the ones in Proposition \ref{prop:hodge-bound}.  On the other hand, in \cite{CucinottaMondino}, the authors use $k$-component splitting maps to obtain integral control of the sum $\mathsf R_k$ of the lowest $k$ Ricci eigenvalues on $k$-symmetric balls; see \cite[Lemma~3.2 and Theorem~3.4]{CucinottaMondino}. Moreover, in \cite[Theorem~3.8(2)]{CucinottaMondino} the authors prove a quantitative obstruction to an $\mathbb R^{n-1}$-splitting under almost nonnegative Ricci curvature, integral almost-nonnegativity of $\operatorname{Ric}_{n-2}$, and integral positivity of the truncated scalar curvature. The key new feature in the proof of Proposition \ref{prop:hodge-bound} is the addition of a contribution coming from the wedge of the one-forms $\beta^a$ to make the intermediate curvature appear: compare with \eqref{eq:mixed-trace}. We further notice that a large part of the proof in this note is likely to work in the setting of metric-measure spaces as well; see Remark \ref{rem:mms}.
\vspace{0.1cm}

For Theorem \ref{cor:noncollapsed-urysohn-width}, noncollapsing and
Bishop--Gromov comparison convert the $O(r^m)$ volume bound into an $O(r^m)$
packing bound for a large annulus.  Averaging over $O(r)$ distance
spheres gives an $O(r^{m-1})$ covering bound for one boundary sphere,
and \cite[Theorem~3.3]{Papasoglu} then gives
bounded $m$-dimensional Urysohn width.

\subsection*{Acknowledgments}

This work was partially supported by NSF grant DMS-2550590. I thank Elia Bruè, Otis Chodosh, Alessandro Cucinotta, Aditya Kumar, Chao Li, Aaron Naber, Daniele Semola,
and Kai Xu for their encouragement, interest in this work, and comments
on preliminary versions.  I am especially grateful to Daniele Semola
for discussions leading to the effective formulation of the results presented here;
to Elia Bruè and Kai Xu for discussions clarifying the
relation of the present argument to the results of Kapovitch--Wilking; to Alessandro
Cucinotta for suggesting the inclusion of
Theorem \ref{cor:noncollapsed-urysohn-width}; and to Aditya Kumar for encouraging me to present the results appeared in the first version of this note for arbitrary Ricci lower bounds and suggesting that I include Theorem \ref{prop:rationally-inessential}.

\subsection*{Disclosure of AI tools.}
This work made substantial use of OpenAI's GPT-5.6 Sol with Ultra reasoning effort and Codex. GPT proposed the central inductive procedure based on the Hodge
obstruction and rank improvement behind the proof of Corollary \ref{thm:scalar-intro}. I formulated and guided the
problem, suggested relevant strategies and literature, and, following the suggested proof strategy of Corollary \ref{thm:scalar-intro}, developed this note.

I emphasize that the proof below is quite different from the original argument suggested by GPT. On the one hand, the proof below is effective, while the original suggested strategy proceeded by contradiction: this emerged through discussions with Daniele Semola, to whom I am very grateful. On the other hand, contrary to the original strategy proposed by the LLM, the proof below is less reliant on the full proof of \cite[Theorem 5.1]{KapovitchWilking}; see the discussion before Proposition \ref{prop:critical-package}: this emerged through discussions with Elia Bruè and Kai Xu. 

After examining the original output, I observed that a modification
of the original argument proposed by the LLM gives the extension to the intermediate curvatures of
Brendle--Hirsch--Johne, as stated in Theorem \ref{thm:main}.  This extension, motivated by a question I
previously had in a joint work with Kai Xu, is my own contribution. Theorems \ref{cor:noncollapsed-urysohn-width} and \ref{prop:rationally-inessential} emerged through discussions with colleagues, ad described in the Acknowledgments.

\subsection*{Addendum}
After this manuscript was completed and circulated privately, I
learned of independent work by Jian Ge \cite{GeGromov}.  Ge proves
Corollary \ref{thm:scalar-intro} by a completely different method, based on
heat-kernel Fisher metric and Nash entropy.  At the same time, I learned of independent work of Bochao Kong and Xingyu Zhu \cite{KZ}. They also establish Corollary \ref{thm:scalar-intro} using heat-kernel techniques. Both approaches are substantially different from the rank-improvement argument used here.
However, I highlight one connection: \cite[Lemma 4.2]{KZ} bears strong similarities with the computations in the proof of Proposition \ref{prop:hodge-bound}.

After completing the revision of this manuscript, in which minor
modifications of the original proofs led to the present form of Theorem
\ref{thm:main}, I learned of independent work of Robert Koirala
\cite[Theorem~1.1]{Koirala2026}. Koirala's result independently implies Theorem~\ref{thm:main} and sharpens it in the following way: it also gives an all-scale estimate with an exponential factor. His proof, based on heat-kernel techniques, is substantially different from the argument used here.

\section{Splitting maps and curvature algebra}

\subsection{Intermediate curvature and volume ratio}\label{sec:Inter}

Let $(M^n,g)$ be a complete Riemannian manifold. We use the convention
\[
 R(X,Y)Z=\nabla_X\nabla_YZ-\nabla_Y\nabla_XZ-\nabla_{[X,Y]}Z,
 \qquad K_{ab}=\mathrm{Sect}(e_a\wedge e_b) = \langle R(e_a,e_b)e_b,e_a\rangle,
\]
where $e_a,e_b$ are orthonormal. In this way we can define Brendle--Hirsch--Johne's intermediate curvatures as in \eqref{eq:C-definition}. Recall that $\Cint_1=\Ric$, $\Cint_2=\operatorname{biRic}$,
$\Cint_{n-1}=\Scal/2$.

For a fixed integer $m\geq 0$ define the \textit{$m$-volume ratio} for $x\in M$ and $r>0$
\begin{equation}\label{eq:excess}
                   \Ex_m(x,r):=\frac{\Vol B_r(x)}{r^m}.
\end{equation}

\subsection{Metric and harmonic splitting}
Whenever a ball occurs as an argument of $d_{GH}$, or as the
domain or target of a Gromov--Hausdorff approximation, it is
understood to be the corresponding closed metric ball. Moreover, we always intend that such
comparisons are pointed with respect to the displayed centers.

\begin{definition}\label{def:metric-split}
Let $0<\eps<1$ and let $k,n$ be integers such that $0\leq k\leq n$.  A ball $B_r(x)$ is
\emph{$(k,\eps)$-split} if there are a pointed proper length space
$(Z,z)$ and a pointed $\eps r$-Gromov--Hausdorff approximation
\[
 B_{\eps^{-1}r}(x)\longrightarrow
 B_{\eps^{-1}r}((0,z))\subset\R^k\times Z.
\]
Note that every ball is $(0,\eps)$-split.
\end{definition}

The following is the quantitative harmonic replacement associated with
Definition \ref{def:metric-split}. It is due to Cheeger--Colding; see
\cite{CheegerColdingWarped,CheegerColdingI} and
\cite[Lemma~1.21(2)]{CheegerNaber}.

\begin{proposition}\label{prop:harmonic-replacement}
For all integers $n,k$ such that $1\leq k\leq n$ and $\eta>0$ there is
$0<\eps=\eps(n,k,\eta)<1$ with the following property. Let $(M^n,g)$ be a complete Riemannian manifold, let $x\in M$ and $r>0$. If
$\Ric\geq -\varepsilon r^{-2}$ and $B_r(x)$ is $(k,\eps)$-split, then there are harmonic
functions
\[
           u=(u^1,\ldots,u^k):B_{4r}(x)\longrightarrow\R^k
\]
such that for every $a=1,\ldots,k$ on $B_{2r}(x)$ we have $ |\nabla u^a|\leq C(n)$ on $B_{2r}(x)$, and moreover
\begin{equation}\label{eq:harmonic-bounds}
 \frac{1}{\Vol B_{2r}(x)}\int_{B_{2r}(x)}
 \left(|G-I|^2+r^2\sum_{a=1}^k|\Hess u^a|^2\right)dV\leq\eta,
\end{equation}
where $G^{ab}:=\langle \nabla u^a,\nabla u^b\rangle$, and $I$ is the identity matrix. Here $|A|$ denotes the Hilbert--Schmidt norm of the matrix $A$.
\end{proposition}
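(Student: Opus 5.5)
The plan is the standard Cheeger--Colding argument: first build almost-splitting functions out of distance functions to far-away points, then replace them by harmonic functions with the same boundary values, and finally use the Bochner formula to control Hessians and the Gram matrix. By scaling we may assume $r=1$, so $\Ric\geq-\eps$ and there is an $\eps$-GH approximation $\Phi\colon B_{\eps^{-1}}(x)\to B_{\eps^{-1}}((0,z))\subset\R^k\times Z$.

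\textbf{Step 1 (distance-type approximate coordinates).} For each $a=1,\dots,k$ pick points $q_a^\pm\in B_{\eps^{-1}}(x)$ with $\Phi(q^\pm_a)$ close to $(\pm L e_a,z)$, where $L\simeq\eps^{-1/2}$. Define
\[
b^a:=d(\cdot,q_a^-)-d(x,q_a^-),
\]
and similarly for the diagonal directions $(e_a+e_b)/\sqrt2$, which we need for polarization. In $\R^k\times Z$ the excess $d(\cdot,q_a^+)+d(\cdot,q_a^-)-d(q_a^+,q_a^-)$ is $o(1)$ on $B_{8}(x)$. Laplacian comparison (with $\Ric\geq-\eps$) gives $\Delta d(\cdot,q^\pm_a)\leq C(n)/L+C\sqrt\eps$ on $B_8(x)$ in the barrier sense. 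Since the two distance functions sum to an almost constant, the Abresch--Gromoll/Cheeger--Colding argument upgrades this to $\fint_{B_8}|\Delta b^a|\to0$ in the distributional sense and $\fint_{B_8}\bigl||\nabla b^a|^2-1\bigr|\to0$. The latter uses the segment inequality together with the fact that $b^a$ is almost an affine coordinate along almost every geodesic. Polarization with the diagonal functions then gives $\fint_{B_8}|\langle\nabla b^a,\nabla b^b\rangle-\delta^{ab}|\to0$.

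\textbf{Step 2 (harmonic replacement).} Solve $\Delta u^a=0$ on $B_{4}(x)$ with $u^a=b^a$ on $\partial B_4(x)$. Integrating by parts, $\int|\nabla(u^a-b^a)|^2=\int(u^a-b^a)\Delta b^a$. Combined with the Poincar\'e inequality, volume doubling (Bishop--Gromov) and the maximum principle bound $|u^a-b^a|\leq C$, this makes $\fint_{B_4}|\nabla(u^a-b^a)|^2$ small. The Cheng--Yau gradient estimate then yields $|\nabla u^a|\leq C(n)$ on $B_2(x)$, since $u^a$ is harmonic with $\operatorname{osc}_{B_4}u^a\leq C$. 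Consequently $\fint_{B_2}|G-I|$ is small, and since $|G-I|$ is bounded, so is $\fint_{B_2}|G-I|^2$.

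\textbf{Step 3 (Hessian bound).} Take a Cheeger--Colding cutoff $\varphi$ that equals $1$ on $B_2$, is supported in $B_3$, and satisfies $|\Delta\varphi|+|\nabla\varphi|\leq C(n)$; such cutoffs exist under $\Ric\geq-\eps$. Multiply Bochner's formula
\[
\tfrac12\Delta\bigl(|\nabla u^a|^2-1\bigr)=|\Hess u^a|^2+\Ric(\nabla u^a,\nabla u^a)
\]
by $\varphi$ and integrate. This gives
\[
\fint_{B_2}|\Hess u^a|^2\leq C\fint_{B_3}\bigl||\nabla u^a|^2-1\bigr|+C\eps,
\]
which is small by Step 2 once Step 2 is run on $B_3$ as well. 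Choosing $L$ and $\eps=\eps(n,k,\eta)$ small makes all the errors less than $\eta$.

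\textbf{Main obstacle.} The main obstacle is Step 1: passing from the purely metric GH information to integral control of $|\nabla b^a|^2-1$ and of the mixed products. This is where the excess estimate and the segment inequality are essential, and I would cite \cite{CheegerColdingWarped,CheegerColdingI} for it rather than redo it.
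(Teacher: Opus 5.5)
Your proposal is correct and is essentially the standard Cheeger--Colding argument: distance-function coordinates, harmonic replacement, the Cheng--Yau gradient bound, and Bochner's formula with a Cheeger--Colding cutoff. The paper does not prove this proposition; it cites exactly this argument via \cite{CheegerColdingWarped,CheegerColdingI} and \cite[Lemma~1.21(2)]{CheegerNaber}. Two small clean-ups in Step~1 are worth making:
\begin{itemize}
\item The bound $\bigl||\nabla b^a|^2-1\bigr|=0$ a.e.\ is trivial because $b^a$ is a distance function, so the segment inequality is not needed there.
\item The polarization is best done on the harmonic replacements rather than on the distance functions. For the harmonic replacement $u^{ab}$ of the diagonal distance function, the maximum principle gives $C^0$-closeness of $u^{ab}$ to $(u^a+u^b)/\sqrt2$. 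Caccioppoli then upgrades this to $L^2$-closeness of the gradients on $B_3$. This upgrade is not automatic for the distance functions themselves.
\end{itemize}
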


\begin{remark}\label{rem:GHisometry}
Let $\{\varepsilon_i\}_{i\in\mathbb N}$ be a sequence of nonnegative numbers such that $\varepsilon_i\to 0$. If $(M^n_i,g_i)$ are complete Riemannian manifolds with $\mathrm{Ric}_{g_i}\geq -\varepsilon_i$ and
\[
 (M_i,g_i,p_i)\longrightarrow(\mathbb R^k,g_{\mathrm{eu}},0)
\]
in the pointed Gromov--Hausdorff sense, then, up to subsequences, the harmonic functions
$u_i$ in Proposition \ref{prop:harmonic-replacement} can be chosen in the following way:
\begin{enumerate}
    \item $u_i:B_i(p_i)\to B_{i+1}^{\mathbb R^k}(0)$ and $u_i(p_i)=0$;
    \item $u_i$ is a $2^{-i}$-GH approximation from $B_i(p_i)$ to $B_i^{\mathbb R^k}(0)$.
\end{enumerate}  
\end{remark}

\subsection{The Weitzenb\"ock formula}

Let $(M^n,g)$ be a complete $n$-dimensional Riemannian manifold, and let $1\leq p\leq n-1$ be an integer. We shall use the Weitzenb\"ock formula
\begin{equation}\label{eq:weitzenbock}
       \Delta_H=d\delta+\delta d=\nabla^*\nabla+q_p(\Rm)
       \qquad\text{on }\Lambda^pT^*M.
\end{equation}
With our curvature convention,
$q_1(\Rm)=\Ric$.  The only algebraic
fact needed below is the following standard formula.

\begin{lemma}\label{lem:q-diagonal}
Let $\beta^1,\ldots,\beta^n$ be an orthonormal coframe in $(M^n,g)$ and
$I\subset\{1,\ldots,n\}$ with $|I|=p$.  If
$\beta^I=\bigwedge_{a\in I}\beta^a$, then
\begin{equation}\label{eq:q-diagonal}
 \big\langle q_p(\Rm)\beta^I,\beta^I\big\rangle
       =\sum_{a\in I}\sum_{b\notin I}K_{ab}.
\end{equation}
Consequently, the following holds. Let $0\leq m\leq n-2$ be an integer. If $\beta^1,\ldots,\beta^{m+1}$ is an
orthonormal coframe for a $(m+1)$-plane $W$, and
$\Theta=\beta^1\wedge\cdots\wedge\beta^{m+1}$, then
\begin{equation}\label{eq:mixed-trace}
 \big\langle q_{m+1}(\Rm)\Theta,\Theta\big\rangle
 +\sum_{a=1}^{m+1}
   \big\langle q_1(\Rm)\beta^a,\beta^a\big\rangle
       =2\Cint_{m+1}(W).
\end{equation}
\end{lemma}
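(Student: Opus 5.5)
The plan is to compute the Weitzenb\"ock curvature term on decomposable forms pointwise, using the explicit algebraic form of $q_p(\Rm)$. Work at a point with an orthonormal frame $e_1,\dots,e_n$ dual to $\beta^1,\dots,\beta^n$. For any $p$-form $\omega$, the Weitzenb\"ock operator can be written as
\[
 q_p(\Rm)\omega=-\sum_{a,b}\beta^a\wedge\iota_{e_b}\bigl(R(e_a,e_b)\omega\bigr),
\]
with a sign fixed so that $q_1(\Rm)=\Ric$ under our convention. Here $R(e_a,e_b)$ acts on forms as a derivation. I would fix the sign by testing $p=1$: there one must get $\langle q_1\beta^c,\beta^c\rangle=\sum_{b}K_{cb}$.

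For $\omega=\beta^I$, expand $R(e_a,e_b)\beta^I$ as a derivation. It is a sum over $i\in I$ of terms in which $\beta^i$ is replaced by $R(e_a,e_b)\beta^i=-\sum_c R_{abi}{}^{c}\beta^c$.

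We only need the diagonal coefficient $\langle q_p\beta^I,\beta^I\rangle$. A term contributes to it only if, after applying $\iota_{e_b}$ and then $\beta^a\wedge$, we return to $\beta^I$. Two kinds of terms survive:
\begin{itemize}
\item[(i)] $c=i$. This vanishes by antisymmetry $R_{abii}=0$.
\item[(ii)] $c=b$ and $a=i$, where $b$ either lies outside $I$ (up to $b=i$) or satisfies $b=i$ trivially. The cleanest version is to collect the surviving terms as
\[
 \sum_{i\in I}\sum_{b}R_{ibbi}-\sum_{i,j\in I}R_{ijji}
 =\sum_{i\in I}\Ric(e_i,e_i)-\sum_{i\ne j\in I}K_{ij}.
\]
\end{itemize}
This is the standard identity $\langle q_p\beta^I,\beta^I\rangle=\sum_{i\in I}\Ric_{ii}-2\sum_{i<j\in I}K_{ij}$, which rearranges to $\sum_{a\in I}\sum_{b\notin I}K_{ab}$. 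The bookkeeping of signs and the double counting in the $i,j\in I$ sum is the main obstacle. As a check, the formula must give $\Ric$ for $p=1$ and $\Ric$-type values for $p=n-1$ by Hodge duality, since $q_{n-p}$ is conjugate to $q_p$ under $*$. I would verify the final formula on round spheres: there $q_p=p(n-p)$, matching $|I|\cdot|I^c|$.

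For the consequence, take $I=\{1,\dots,m+1\}$ and extend the coframe of $W$ to an orthonormal coframe of $T^*_xM$. Then
\[
 \langle q_{m+1}\Theta,\Theta\rangle=\sum_{a\le m+1}\sum_{b>m+1}K_{ab},
 \qquad
 \sum_{a\le m+1}\langle q_1\beta^a,\beta^a\rangle=\sum_{a\le m+1}\sum_{b\ne a}K_{ab}.
\]
The second sum splits into the part with $b>m+1$ plus $\sum_{a\ne b\le m+1}K_{ab}=2\sum_{a<b\le m+1}K_{ab}$. Adding the two expressions gives
\[
 2\sum_{a\le m+1}\sum_{b>m+1}K_{ab}+2\sum_{a<b\le m+1}K_{ab}.
\]
Since $b>a$ runs over both ranges, this equals $2\sum_{a=1}^{m+1}\sum_{b=a+1}^nK_{ab}=2\Cint_{m+1}(W)$ by \eqref{eq:C-definition}, which is \eqref{eq:mixed-trace}.
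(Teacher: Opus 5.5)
Your proposal is correct, and your deduction of \eqref{eq:mixed-trace} from \eqref{eq:q-diagonal} (extend the coframe of $W$, split each $\Ric(e_a,e_a)$ into the parts with $b>m+1$ and $b\le m+1$, and add) is exactly the paper's argument. The only difference is that the paper does not derive \eqref{eq:q-diagonal} but cites Labbi's equation~(11), whereas you sketch the standard computation from $q_p(\Rm)=-\sum_{a,b}\beta^a\wedge\iota_{e_b}R(e_a,e_b)$. That sketch is sound, and it becomes cleaner if you note that the only diagonal contributions come from $a=i\in I$ and $b=c\notin I$: the case $c=i$ vanishes since $R_{abii}=0$, and $c\in I\setminus\{i\}$ kills the wedge. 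This gives $\sum_{i\in I}\sum_{b\notin I}K_{ib}$ directly, without passing through $\sum_{i\in I}\Ric(e_i,e_i)-2\sum_{i<j\in I}K_{ij}$.
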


\begin{proof}
Formula~\eqref{eq:q-diagonal} is in \cite[equation~(11)]{Labbi}.  For completeness, the first
term in \eqref{eq:mixed-trace} is
$\sum_{a\leq m+1<b}K_{ab}$, while the sum of the one-form terms is
\[
  \sum_{a=1}^{m+1}\Ric((\beta^a)^\sharp,(\beta^a)^\sharp)
  =2\sum_{a<b\leq m+1}K_{ab}+\sum_{a\leq m+1<b}K_{ab}.
\]
Their sum is twice \eqref{eq:C-definition}, thus proving \eqref{eq:mixed-trace}, as desired.
\end{proof}

\section{The quantitative Hodge obstruction}

The next proposition is the first core ingredient of our proof.

\begin{proposition}\label{prop:hodge-bound}
Fix an integer $n\geq2$, and an integer $0\leq m\leq n-2$.
There are constants $\eta(n,m)>0$ and $C(n,m)<\infty$ with the
following property. Let $(M^n,g)$ be a complete Riemannian manifold, let $\sigma> 0$, $0\leq \kappa\leq 1$, $x\in M$, and $r>0$. Assume
\[
                 \Ric\geq -\kappa^2r^{-2},
        \qquad   \mathcal{C}_{m+1}\geq\sigma^2,
\]
on $B_{2r}(x)\subset M$, and suppose there are harmonic functions
$u^1,\ldots,u^{m+1}$ on $B_{4r}(x)$ satisfying
\eqref{eq:harmonic-bounds} (and the gradient bound stated before it) for some
$\eta\leq\eta(n,m)$.  Then
\begin{equation}\label{eq:hodge-scale}
                         \sigma^2r^2\leq C(n,m).
\end{equation}
\end{proposition}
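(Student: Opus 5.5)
We will use the $(m+1)$ closed forms $\beta^a:=du^a$ together with the closed $(m+1)$-form $\Theta:=du^1\wedge\cdots\wedge du^{m+1}$. We apply the Bochner formula \eqref{eq:weitzenbock} to each, integrate against a cutoff, and sum so that Lemma~\ref{lem:q-diagonal} produces $2\Cint_{m+1}$. The point is that $du^a$ is harmonic ($d$-closed and co-closed, since $u^a$ is harmonic), so $\Delta_H du^a=0$. For $\Theta$ we have $d\Theta=0$, but $\delta\Theta$ is not zero. However, $\delta\Theta$ is built from contractions of $\Hess u^a$ with the $du^b$'s, so $|\delta\Theta|\leq C(n,m)\sum_a|\Hess u^a|$ pointwise, using $|\nabla u^a|\leq C(n)$.

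Fix a cutoff $\phi$ supported in $B_{2r}(x)$, with $\phi\equiv1$ on $B_r(x)$ and $|\nabla\phi|\leq C/r$. Use $\phi^2$ as the test function. For a closed $p$-form $\omega$ we have
\[
\int\phi^2\langle q_p(\Rm)\omega,\omega\rangle=\int\phi^2\langle\Delta_H\omega,\omega\rangle-\int\phi^2\langle\nabla^*\nabla\omega,\omega\rangle .
\]
Here $\int\phi^2\langle\Delta_H\omega,\omega\rangle=\int\phi^2|\delta\omega|^2+\int\langle\delta\omega,\iota_{\nabla\phi^2}\omega\rangle$ (using $d\omega=0$), and
\[
-\int\phi^2\langle\nabla^*\nabla\omega,\omega\rangle=-\int\phi^2|\nabla\omega|^2-\int\langle\nabla_{\nabla\phi^2}\omega,\omega\rangle .
\]
Every term on the right is bounded by $C\int(\phi^2+r|\nabla\phi|)\,|\nabla\omega|\,(|\nabla\omega|+|\omega|)$. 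For $\omega=du^a$ or $\Theta$, $|\nabla\omega|\leq C\sum_a|\Hess u^a|$ and $|\omega|\leq C$. By Cauchy--Schwarz and \eqref{eq:harmonic-bounds}, all these right-hand sides are therefore at most $C(n,m)\,\eta^{1/2}r^{-2}\Vol B_{2r}(x)$.

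Next we extract the curvature lower bound. Sum the identities for $\Theta$ and for $du^1,\dots,du^{m+1}$. At points where $G$ is invertible, apply Gram--Schmidt to get an orthonormal coframe $\hat\beta^a$ spanning the same plane $W$. Then $\Theta=\sqrt{\det G}\,\hat\beta^1\wedge\cdots\wedge\hat\beta^{m+1}$ and $du^a=\sum_b L^{ab}\hat\beta^b$ with $L=I+O(|G-I|)$. Lemma~\ref{lem:q-diagonal} gives
\[
\langle q_{m+1}\Theta,\Theta\rangle+\sum_a\langle q_1 du^a,du^a\rangle\geq 2\det G\,\Cint_{m+1}(W)-C|\Rm|\,|G-I|.
\]
The crucial difficulty is that we have no bound on $|\Rm|$, only lower bounds, so this naive error term is unusable. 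Instead we write the sum of quadratic forms as $\sum_{\text{terms}}$ with positive-definite structure. Put $S:=\sum_a du^a\otimes du^a$, a nonnegative symmetric tensor, and write $\sum_a\Ric(\nabla u^a,\nabla u^a)=\operatorname{tr}(\Ric\, S)$. Since $\Ric\geq-\kappa^2r^{-2}$, the error $\operatorname{tr}(\Ric(S-P_W))$ (with $P_W$ the orthogonal projection onto $W$) is bounded below by $-\kappa^2r^{-2}|S-P_W|$ only when $S-P_W\ge0$. In general one instead compares with $(1-\lambda_{\min})$-scaled projections, where $\lambda_{\min}$ is the smallest eigenvalue of $G$. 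The analogous issue arises for $q_{m+1}$, whose lower bound in terms of $\Cint_{m+1}$ involves sectional-curvature sums that are not individually bounded below.

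This is the step I expect to be the main obstacle. The first thing I would try is to restrict to the good set $\{|G-I|\leq\eta^{1/4}\}$ and use the following positivity: $\langle q_{m+1}\Theta,\Theta\rangle+\sum_a\langle q_1du^a,du^a\rangle$ depends only on the tensor $\sum_a du^a\otimes du^a$ and on $\Theta\otimes\Theta$. Both of these equal a positive multiple of the ``$W$-adapted'' tensors, up to a rescaling of the frame by $L$. Choosing the $u^a$ to be $G$-orthonormalized pointwise is not allowed, since the forms must be closed. So I would instead choose a constant linear change of the $u^a$ making the average of $G$ equal to $I$, and absorb the remaining discrepancy via $\Ric\geq-\kappa^2r^{-2}$ acting on $S-\lambda_{\min}P_W\geq0$. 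On the bad set, I would use the lower bound $\geq-C\kappa^2r^{-2}$ coming from the Ricci lower bound together with $\Cint_{m+1}\geq\sigma^2>0$ directly, which requires controlling the $q_{m+1}$ term there as well. The bad set has measure at most $\eta^{1/2}\Vol B_{2r}(x)$ by Chebyshev.

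Granting this, integrating over $B_r(x)$, where $\phi=1$ and $\det G\geq1/2$ on most of the set, gives
\[
\sigma^2\Vol B_r(x)\leq C\big(\eta^{1/2}+\kappa^2\eta^{1/2}\big)r^{-2}\Vol B_{2r}(x)+\ldots
\]
By Bishop--Gromov with $\kappa\leq1$, $\Vol B_{2r}(x)\leq C(n)\Vol B_r(x)$, which yields $\sigma^2r^2\leq C(n,m)$. The curvature-free bound on the right-hand side comes only from the $\Hess$ and cutoff terms, so \eqref{eq:hodge-scale} follows for $\eta\leq\eta(n,m)$ small.
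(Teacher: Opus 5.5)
There is a genuine gap, exactly at the step you flag and then grant. You never establish a pointwise lower bound for $\langle q_{m+1}\Theta,\Theta\rangle+\sum_a\langle q_1du^a,du^a\rangle$ built from the unnormalized closed forms, and that inequality is the whole content of the proposition.

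To see why it fails, let $e_1,\ldots,e_{m+1}$ be an orthonormal eigenbasis of $W$ for $S=\sum_a du^a\otimes du^a$, with eigenvalues $\lambda_i$ (those of $G$). Then the sum equals $\det G\,X+\sum_i\lambda_i\Ric(e_i,e_i)$, where $X=\sum_{i\leq m+1<b}K_{ib}$, while $2\Cint_{m+1}(W)=X+\sum_i\Ric(e_i,e_i)$. Letting $\Ric\geq-\kappa^2r^{-2}$ act on $S-\lambda_{\min}P_W\geq0$ only handles the $q_1$ part. What remains is $(\det G-\lambda_{\min})X$, and two things go wrong:
\begin{enumerate}
\item $\det G>\lambda_{\min}$ occurs for $G$ arbitrarily close to $I$, e.g.\ $G=\mathrm{diag}(1+\epsilon,1,\ldots,1)$.
\item $X$ has no lower bound under your hypotheses once $m\leq n-3$, since the curvature inside $W^\perp$ can be large and positive. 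Only for $m=n-2$ is $X=\Ric(\nu,\nu)$ controlled.
\end{enumerate}
Normalizing the \emph{average} of $G$ by a constant linear change does nothing pointwise. On the bad set, $\det G\,X$ is completely uncontrolled.

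The premise that pushed you there is false: the forms need not be closed. For every compactly supported smooth $p$-form $\xi$, integrating \eqref{eq:weitzenbock} gives $\int\langle q_p(\Rm)\xi,\xi\rangle=\int(|d\xi|^2+|\delta\xi|^2-|\nabla\xi|^2)\leq C(n)\int|\nabla\xi|^2$, so closedness buys nothing.

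The paper therefore orthonormalizes pointwise. It sets $(\beta^1,\ldots,\beta^{m+1})^T=G^{-1/2}(du^1,\ldots,du^{m+1})^T$ on $\{|G-I|<3\eta_0\}$ and $\Theta=\beta^1\wedge\cdots\wedge\beta^{m+1}$. It then applies the inequality to $f\chi\Theta$ and $f\chi\beta^a$, where $\chi=\chi(|G-I|^2)$ cuts away the bad set. Since $\nabla G$ is bilinear in $\Hess u$ and $\nabla u$, the derivatives of $\chi$, $\beta^a$ and $\Theta$ are all bounded by $C\sum_a|\Hess u^a|$. With orthonormal forms, \eqref{eq:mixed-trace} holds exactly. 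This yields the pointwise lower bound $2\sigma^2f^2\chi^2$ with no curvature error, so your worry about having no bound on $|\Rm|$ disappears.

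If you prefer to keep your closed forms, a constant weight on $\Theta$ also kills the mismatch. The gradient bound gives $\lambda_{\max}\leq\Lambda:=(m+1)C(n)^2$, hence $\mu:=\Lambda^{-m}\det G\leq\lambda_{\min}\lambda_{\max}^m\Lambda^{-m}\leq\lambda_{\min}$. Then, at every point,
\[
\Lambda^{-m}\langle q_{m+1}\Theta,\Theta\rangle+\sum_a\langle q_1du^a,du^a\rangle=2\mu\,\Cint_{m+1}(W)+\sum_i(\lambda_i-\mu)\Ric(e_i,e_i)\geq2\Lambda^{-m}\sigma^2\det G-\Lambda\kappa^2r^{-2}.
\]
This suffices, because errors of order $r^{-2}$ are harmless for the conclusion $\sigma^2r^2\leq C$.

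Your integration-by-parts bound for the closed forms and your final Chebyshev/Bishop--Gromov step are fine as they stand.
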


\begin{proof}
For $1\leq a,b\leq m+1$, write $\theta^a=du^a$ and
$G^{ab}=\langle\theta^a,\theta^b\rangle$. Let $I$ be the $(m+1)\times (m+1)$-identity matrix. $|A|$ denotes the Hilbert--Schmidt norm of the matrix $A$. Fix a sufficiently small
$\eta_0=\eta_0(n,m)>0$ so that on
\[
                   \Omega:=\{|G-I|<3\eta_0\}
\]
the matrix $G$ is invertible. On $\Omega$ define
\[
 \begin{pmatrix}\beta^1\\ \vdots\\ \beta^{m+1}\end{pmatrix}
       :=G^{-1/2}
         \begin{pmatrix}\theta^1\\ \vdots\\ \theta^{m+1}\end{pmatrix},
 \qquad
                   \Theta:=\beta^1\wedge\cdots\wedge\beta^{m+1}.
\]
Thus $\beta^1,\ldots,\beta^{m+1}$ are orthonormal on $\Omega$.

Choose a smooth function $\chi=\chi(|G-I|^2)$ which equals one when
$|G-I|\leq\eta_0$, vanishes when $|G-I|\geq2\eta_0$, and takes values
in $[0,1]$.  Since
\[
 \nabla G^{ab}
 =\Hess u^a(\,\cdot\,,\nabla u^b)
  +\Hess u^b(\,\cdot\,,\nabla u^a),
\]
the gradient bound stated before \eqref{eq:harmonic-bounds} and differentiation
of $A\mapsto A^{-1/2}$ give
\begin{equation}\label{eq:coframe-derivative}
 |\nabla\chi|^2+|\nabla\Theta|^2
       +\sum_{a=1}^{m+1}|\nabla\beta^a|^2
 \leq C(n,m,\eta_0)\sum_{a=1}^{m+1}|\Hess u^a|^2,
\end{equation}
on $\Omega$.  Note that the forms $\chi\Theta$ and $\chi\beta^a$ extend
smoothly by zero across the boundary of $\Omega$.

Let $f$ be a smooth cutoff function supported in $B_{2r}(x)$, equal to one
on $B_r(x)$, and satisfying $|\nabla f|\leq C(n)/r$.  Set
\[
        \widetilde\omega:=f\chi\Theta,
        \qquad \omega^a:=f\chi\beta^a\quad \forall 1\leq a\leq m+1.
\]
Integrating the identity \eqref{eq:weitzenbock} gives, for any compactly
supported $p$-form $\xi$,
\begin{equation}\label{eq:integrated-W}
 \int\langle q_p(\Rm)\xi,\xi\rangle
 =\int\bigl(|d\xi|^2+|\delta\xi|^2-|\nabla\xi|^2\bigr)
 \leq C(n)\int|\nabla\xi|^2.
\end{equation}
Equation \eqref{eq:mixed-trace} and the
curvature hypothesis give the pointwise lower bound on $B_{2r}(x)$:
\[
 \langle q_{m+1}(\Rm)\widetilde\omega,\widetilde\omega\rangle
 +\sum_{a=1}^{m+1}\langle q_1(\Rm)\omega^a,\omega^a\rangle
 \geq 2\sigma^2 f^2\chi^2.
\]
Summing \eqref{eq:integrated-W} for $\xi$ in the list $(\widetilde\omega,\omega^1,\ldots,\omega^{m+1})$ and using
\eqref{eq:coframe-derivative} and \eqref{eq:harmonic-bounds} therefore yields
\begin{align}
 2\sigma^2\int_{B_{2r}(x)}f^2\chi^2
 &\leq C(n,m,\eta_0)
 \left(r^{-2}\Vol B_{2r}(x)
       +\int_{B_{2r}(x)}\sum_{a=1}^{m+1}|\Hess u^a|^2\right) \notag\\
 &\leq C(n,m,\eta_0)(1+\eta)r^{-2}\Vol B_{2r}(x).
 \label{eq:hodge-upper}
\end{align}

It remains to bound the left-hand side of the above inequality from below.  Markov's
inequality and \eqref{eq:harmonic-bounds} give
\[
 \Vol\bigl(B_{2r}(x)\cap\{|G-I|>\eta_0\}\bigr)
 \leq \eta_0^{-2}\eta\,\Vol B_{2r}(x),
\]
whereas Bishop--Gromov gives
$\Vol B_r(x)\geq c(n)\Vol B_{2r}(x)$.  Choose $\eta(n,m)<1$ so small that 
$\eta(n,m)\leq c(n)\eta_0^2/2$.  Since $f=\chi=1$ on
$B_r(x)\cap\{|G-I|\leq\eta_0\}$, we thus deduce, using the previous inequalities,
\[
        \int f^2\chi^2\geq c(n)\Vol B_{2r}(x)/2.
\]
Combining this with \eqref{eq:hodge-upper} proves
\eqref{eq:hodge-scale}, as desired.
\end{proof}

Combining Propositions \ref{prop:harmonic-replacement} and
\ref{prop:hodge-bound} gives the following.

\begin{corollary}
\label{cor:bounded-splitting}
For every integer $n\geq2$ and integer $0\leq m\leq n-2$ there are
$0<\eps_H(n,m)<1$ and $C(n,m)>0$ such that the following holds. Let $(M^n,g)$ be a complete Riemannian manifold, and fix $x\in M$, $r,\sigma>0$. Let us assume that
\[
         \Ric\geq - \eps_H r^{-2},\qquad \mathcal{C}_{m+1}\geq\sigma^2,
\]
and $B_r(x)$ is $(m+1,\eps_H)$-split. Then
\[
                           r\leq C(n,m)\sigma^{-1}.
\]
\end{corollary}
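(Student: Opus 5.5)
The plan is to combine Proposition \ref{prop:harmonic-replacement} with Proposition \ref{prop:hodge-bound}, after matching the constants. Let $\eta(n,m)$ be the threshold from Proposition \ref{prop:hodge-bound}. Apply Proposition \ref{prop:harmonic-replacement} with $k=m+1$ and $\eta=\eta(n,m)$; this produces some $\eps(n,m+1,\eta(n,m))\in(0,1)$. Set
\[
\eps_H(n,m):=\min\bigl\{\eps(n,m+1,\eta(n,m)),\,1\bigr\}.
\]
Note that $k=m+1\leq n-1\leq n$, so Proposition \ref{prop:harmonic-replacement} does apply.

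Under the hypotheses, $\Ric\geq-\eps_H r^{-2}\geq -\eps r^{-2}$ and $B_r(x)$ is $(m+1,\eps_H)$-split. Since a smaller $\eps$ is a stronger condition, the ball is also $(m+1,\eps)$-split with $\eps=\eps(n,m+1,\eta(n,m))$. Strictly, an $\eps_H r$-GH approximation on $B_{\eps_H^{-1}r}(x)$ restricts to an $\eps r$-approximation on the smaller ball $B_{\eps^{-1}r}(x)$, up to harmless constants; if needed, I would simply shrink $\eps_H$ by a factor of $2$ to absorb this.

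Proposition \ref{prop:harmonic-replacement} then gives harmonic $u^1,\dots,u^{m+1}$ on $B_{4r}(x)$ with the gradient bound and \eqref{eq:harmonic-bounds} holding for $\eta=\eta(n,m)$.

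Next, check the hypotheses of Proposition \ref{prop:hodge-bound}. Set $\kappa:=\eps_H^{1/2}\leq 1$, so that $\Ric\geq-\kappa^2r^{-2}$ on $B_{2r}(x)$. We also have $\mathcal C_{m+1}\geq\sigma^2$ there. Proposition \ref{prop:hodge-bound} then yields $\sigma^2r^2\leq C(n,m)$, that is, $r\leq C(n,m)^{1/2}\sigma^{-1}$, which is the claim after renaming the constant.

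The only delicate point is the bookkeeping: the order of choosing constants. First $\eta(n,m)$ is fixed, then $\eps_H$ is chosen depending on it, and one must make sure that the Ricci lower bound $\kappa\leq1$ required by Proposition \ref{prop:hodge-bound} holds. Its constants are uniform in $\kappa\in[0,1]$, via Bishop–Gromov with lower bound $-1$ at scale $r$. No genuine obstacle is expected.
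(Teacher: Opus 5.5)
Your proposal is correct and is the paper's argument: the paper obtains Corollary \ref{cor:bounded-splitting} simply by feeding the harmonic splitting map of Proposition \ref{prop:harmonic-replacement} (with $k=m+1$ and $\eta=\eta(n,m)$) into Proposition \ref{prop:hodge-bound} with $\kappa=\eps_H^{1/2}\le 1$. Your caveat about restricting Gromov--Hausdorff approximations to smaller balls is unnecessary: since $\eps(n,m+1,\eta(n,m))<1$, your definition gives $\eps_H=\eps(n,m+1,\eta(n,m))$ exactly.
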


\section{Critical scales and effective rank improvement}

We record a consequence of Cheeger--Colding's almost splitting theorem \cite{CheegerColdingWarped, CheegerColdingI}. One can give a simple proof by contradiction of Lemma \ref{lem:finite-segment}, based on Gigli's splitting theorem \cite{GigliSplitting}.

\begin{lemma}\label{lem:finite-segment}
For every integer $n\geq 1$, every integer $0\leq k<n$, and $0<\eps<1$, there are
$L=L(n,k,\eps)>\eps^{-1}$ and
$0<\delta_s=\delta_s(n,k,\eps)<1$ with the following property.
Let $(M^n,g)$ be a complete Riemannian manifold, $s>0$, assume $\Ric\geq -\delta_ss^{-2}$, and let $(Z,d_Z,z_0)$ be a pointed proper geodesic space. Assume the closed ball $\overline B_{Ls}(q)$ is
$\delta_s s$-close to the closed $Ls$-ball centered at $(0,z_0)$ in
$\R^k\times Z$.  If $z_0$ is the midpoint of a minimizing 
segment $\gamma:[-Ls,Ls]\to Z$, then
$B_s(q)$ is $(k+1,\eps)$-split.
\end{lemma}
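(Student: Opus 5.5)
We argue by contradiction, using Gromov's precompactness theorem and Gigli's splitting theorem as the text suggests. Fix $n,k,\eps$ and choose $L=L_i\to\infty$, $\delta_s=\delta_i\to0$. After rescaling we may take $s=1$. Suppose the statement fails for all such pairs. Then there are complete $(M_i^n,g_i,q_i)$ with $\Ric\geq-\delta_i$ and pointed proper geodesic spaces $(Z_i,z_i)$. The ball $\overline B_{L_i}(q_i)$ is $\delta_i$-close to $\overline B_{L_i}((0,z_i))\subset\R^k\times Z_i$, and $z_i$ is the midpoint of a minimizing segment $\gamma_i:[-L_i,L_i]\to Z_i$. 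Nevertheless $B_1(q_i)$ is not $(k+1,\eps)$-split.

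By Gromov precompactness, a subsequence satisfies $(M_i,q_i)\to(X,q)$ in the pointed Gromov--Hausdorff sense. Here $X$ is an $\mathrm{RCD}(0,n)$ space, or a Ricci limit space with nonnegative Ricci lower bound (Cheeger--Colding). Since $L_i\to\infty$ and $\delta_i\to0$, the balls $\overline B_R((0,z_i))\subset \R^k\times Z_i$ are uniformly totally bounded for each fixed $R$, because they are close to balls in $M_i$. Hence $(Z_i,z_i)$ also subconverges, to some $(Z,z)$, and $X$ is isometric to $\R^k\times Z$ with $q\mapsto(0,z)$.

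Next we pass the segments to the limit. By Arzelà--Ascoli, $\gamma_i$ restricted to $[-R,R]$ converges for every $R$, and a diagonal argument gives a line $\gamma:\R\to Z$ through $z$. Minimality passes to the limit because $d(\gamma_i(t),\gamma_i(t'))=|t-t'|$ is preserved. Then $t\mapsto(0,\gamma(t))$ is a line in $X$, and it is linearly independent of the existing $\R^k$ directions. Gigli's splitting theorem, or Cheeger--Colding in the Ricci-limit setting, applies to $Z$: as a factor of an $\mathrm{RCD}(0,n)$ space it is $\mathrm{RCD}(0,n-k)$, and here we use $k<n$. Alternatively we may apply the theorem directly to $X$ with the line above. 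Either way $Z=\R\times Z'$, so $X=\R^{k+1}\times Z'$ pointed at $(0,z')$.

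Finally, pointed convergence produces $\tau_i$-GH approximations $B_{\eps^{-1}}(q_i)\to B_{\eps^{-1}}((0,z'))\subset\R^{k+1}\times Z'$ with $\tau_i\to0$. For large $i$ we have $\tau_i<\eps$, so $B_1(q_i)$ is $(k+1,\eps)$-split, which is a contradiction.

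The main obstacle is the bookkeeping of closeness between closed balls versus pointed convergence. We must make sure the closeness of $\overline B_{L_i}$ gives honest pointed convergence of both $M_i$ and $\R^k\times Z_i$ on every fixed scale. We must also ensure the limit of the $Z_i$ is a proper geodesic space to which the splitting theorem applies. This is handled by the standard fact that GH-closeness at scale $L_i\to\infty$ with error $\to0$ implies pointed GH convergence. Splitting off $\R^k$ is compatible with the limit because $Z$ is a metric factor of a limit with a synthetic Ricci lower bound $0$. Finally, $L>\eps^{-1}$ can be ensured simply by taking $L_i$ large.
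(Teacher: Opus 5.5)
Your proof is correct and follows the route the paper indicates: the paper gives no details beyond saying the lemma has a simple proof by contradiction from Gigli's splitting theorem, and your argument (limit $X\cong\R^k\times Z$, the segments $\gamma_i$ converging to a line in $Z$, splitting, then reading off the $(k+1,\eps)$-splitting from pointed convergence) is exactly that argument. The one step worth making explicit is that $Z$ carries an $\mathrm{RCD}(0,n-k)$ structure: apply Gigli's theorem $k$ times along the lines $t\mapsto(te_a,z)$, whose Busemann level sets are $\R^{k-1}\times Z$ with the product metric, and $k<n$ is what keeps each of these steps in the range $N\geq 2$ of Gigli's theorem.
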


\begin{remark}
    Corollary \ref{cor:bounded-splitting} immediately gives
the following diameter bound. If $0\leq m\leq n-2$, $\{\varepsilon_i\}_{i\in\mathbb N}$ is an infinitesimal sequence, and
\[
 (M_i,g_i,p_i)\longrightarrow(\R^m\times K,(0,z)),
 \qquad
 \Ric_{g_i}\geq -\varepsilon_i,\quad
 \mathcal{C}_{m+1,g_i}\geq\sigma^2>0,
\]
then
\begin{equation}\label{eqn:diamK}
                         \diam K\leq C(n,m)\sigma^{-1}.
\end{equation}
Indeed, recentering at the midpoint of a long segment in $K$ and
applying Lemma \ref{lem:finite-segment} produces an
$(m+1,\eps_H)$-split ball of comparable radius, which is bounded by
Corollary \ref{cor:bounded-splitting}. In the scalar-curvature case this
extends the corresponding diameter results of
\cite{BZhuXZhu} and
\cite[Theorem~1.1]{WangXieZhuZhu}. Note that the present formulation
requires no noncollapsing assumption and applies to all the
intermediate curvatures $\mathcal{C}_{m+1}$. The optimal constant $C(n,m)$ in \eqref{eqn:diamK} is presently unknown. The optimal constant $C$ in Corollary \ref{thm:scalar-intro} and Theorem \ref{thm:main} is also unknown in general.
\end{remark}

We state an elementary lemma. For a Lipschitz $\mathbb R^k$-valued map $b$ defined on a ball $B_2(q)\subset (M^n,g)$, put, for $s\in (0,1)$, and $d$ being the Riemannian distance on $M$,
\begin{equation}\label{eqn:Dqbs}
 D_{q,b}(s):=\sup_{x,y\in\overline B_s(q)}
       \bigl|d(x,y)-|b(x)-b(y)|\bigr|.
\end{equation}

\begin{lemma}\label{lem:distortion-continuity}
The function $s\mapsto D_{q,b}(s)$ is locally Lipschitz on $(0,1)$.
\end{lemma}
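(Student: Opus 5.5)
Fix $s<t$ in a compact subinterval $[s_0,s_1]\subset(0,1)$. The plan is to show $0\le D_{q,b}(t)-D_{q,b}(s)\le C(t-s)$, with $C$ depending only on the Lipschitz constant $\Lambda$ of $b$ on $B_2(q)$ and on nothing else. Monotonicity is immediate: $\overline B_s(q)\subset\overline B_t(q)$, so the supremum over the larger set is at least as large, and $D_{q,b}(s)\le D_{q,b}(t)$. It is also finite, since $|d(x,y)-|b(x)-b(y)||\le (1+\Lambda)d(x,y)\le 2(1+\Lambda)t$.

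For the reverse inequality, take $x,y\in\overline B_t(q)$. We project them radially into $\overline B_s(q)$ along minimizing geodesics to $q$; these exist by completeness and Hopf--Rinow. Concretely, if $d(q,x)\le s$ set $x'=x$. Otherwise let $x'$ be the point on a minimizing geodesic from $q$ to $x$ with $d(q,x')=s$, so that $d(x,x')=d(q,x)-s\le t-s$. Define $y'$ in the same way. Then $x',y'\in\overline B_s(q)$.

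The function $\phi(x,y)=d(x,y)-|b(x)-b(y)|$ is $(1+\Lambda)$-Lipschitz in each variable, by the triangle inequality for $d$ and for the Euclidean norm. Hence
\[
|\phi(x,y)|\le|\phi(x',y')|+2(1+\Lambda)(t-s)\le D_{q,b}(s)+2(1+\Lambda)(t-s).
\]
Taking the supremum over $x,y\in\overline B_t(q)$ gives $D_{q,b}(t)\le D_{q,b}(s)+2(1+\Lambda)(t-s)$. Together with monotonicity, this shows $D_{q,b}$ is Lipschitz with constant $2(1+\Lambda)$ on every compact subinterval of $(0,1)$.

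The only point needing care is the projection step. One must check that the closed balls are compact, so that minimizing geodesics exist, and that the projected points stay in the domain of $b$; both hold because $t<1<2$. No curvature input is needed. If $b$ is only locally Lipschitz, we instead use its Lipschitz constant on the compact set $\overline B_{s_1}(q)$, which is why the conclusion is stated as local Lipschitz continuity.
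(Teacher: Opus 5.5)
Your proof is correct and takes essentially the same approach as the paper. The paper combines the $(1+\operatorname{Lip}(b))$-Lipschitz bound on $(x,y)\mapsto|d(x,y)-|b(x)-b(y)||$ with the Hausdorff bound $d_H(\overline B_s(q),\overline B_{s'}(q))\leq|s-s'|$; your radial projection along minimizing geodesics is an explicit proof of that Hausdorff bound, and monotonicity handles the other direction.
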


\begin{proof}
The function
$(x,y)\mapsto|d(x,y)-|b(x)-b(y)||$ is Lipschitz on $\overline B_s(q)\times \overline B_s(q)$, with Lipschitz constant
controlled by $1+\operatorname{Lip}(b)$.  Since
$d_H(\overline B_s(q),\overline B_{s'}(q))\leq|s-s'|$ for every $s,s'\in (0,1)$, the conclusion
follows.
\end{proof}

The next proposition relies on ideas in the proof of the Kapovitch–-Wilking rescaling theorem \cite[Theorem 5.1]{KapovitchWilking}, and it is inspired by Jansen’s formulation
\cite[Lemma 2.12]{Jansen}. 

We stress that, using the full strength of \cite[Theorem 5.1]{KapovitchWilking} (namely, the fact that the residual factor $K$ in there is unique), one can prove a more general version of Proposition \ref{prop:critical-package} below. We decided to include the weaker statement below for two reasons: first, it has the advantage that its proof (which is inspired by \cite{KapovitchWilking, Jansen}) is essentially self-contained as it only leverages basic facts of Cheeger--Colding theory, \cite[Lemma 2.1]{KapovitchWilking}, and a classical maximal-function argument; second, Proposition \ref{prop:critical-package} is enough to prove our sought Proposition \ref{prop:rank-improvement}.
\begin{proposition}
\label{prop:critical-package}
Fix an integer $n\geq 1$, an integer $0\leq k<n$, and $0<A<\infty$, $0<\eta<1$.
There are $0<\delta_c=\delta_c(n,k,A,\eta)<1$,
$c=c(n)>0$, $C=C(k,n)<\infty$, and $c_0=c_0(n)>0$ with the following
property.  Suppose $(M^n,g,p)$ is complete, has $\Ric_g\geq -\delta_c$, and
\[
 d_{GH}\bigl(B_{\delta_c^{-1}}(p),
             B_{\delta_c^{-1}}^{\R^k}(0)\bigr)\leq\delta_c.
\]
Then there are a measurable set $G\subset B_{1/4}(p)$ and an integer $N\geq 1$ for which the following holds. For every $1\leq j\leq N$ there are $q_j\in B_{1/4}(p)$, $0<t_j\leq 1$, pairwise disjoint measurable $E_j\subset G$, pointed proper geodesic spaces $(Y_j,d_{Y_j},y_j)$, and $a_j\in Y_j$ such that:
\begin{enumerate}[label=\textup{(\roman*)},leftmargin=2.3em]
\item $G=\sqcup_{j=1}^N E_j$ and $\Vol(G)\geq c\Vol B_1(p)$;
\item $\sum_{j=1}^N t_j^k\leq C$, and, for every $1\leq j\leq N$, $E_j\subset B_{Ct_j}(q_j)$;
\item For every $1\leq j\leq N$, $d_{Y_j}(y_j,a_j)\geq c_0t_j$, and
\[
 d_{GH}\bigl(B_{At_j}(q_j),
             B_{At_j}^{\mathbb R^k\times Y_j}((0,y_j))\bigr)\leq\eta t_j.
\]
\end{enumerate}
\end{proposition}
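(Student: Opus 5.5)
Start from the harmonic splitting map. Since $B_{\delta_c^{-1}}(p)$ is $\delta_c$-close to a Euclidean ball, Remark~\ref{rem:GHisometry} (argued by contradiction along a sequence $\delta_c\to0$) gives a harmonic $\R^k$-splitting map $b=u\colon B_2(p)\to\R^k$ with $b(p)=0$. It has $|\nabla b|\leq C(n)$, it is an $o(1)$-GH approximation on $B_2(p)$, and
\[
\frac{1}{\Vol B_2(p)}\int_{B_2(p)}\bigl(|G-I|+|\Hess b|^2\bigr)\leq \psi(\delta_c),
\qquad \psi(\delta_c)\to0 .
\]
For $q\in B_{1/4}(p)$ define the \emph{critical scale}
\[
t(q):=\sup\bigl\{s\in(0,1]: D_{q,b}(s')\geq \eta' s' \text{ for some } s'\geq s\bigr\}
\]
(or the analogous first scale from above). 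This is the largest scale at which $b$ stops being an $\eta' s$-GH approximation onto a Euclidean ball, with $\eta'=\eta'(n,k,A,\eta)$ small. By Lemma~\ref{lem:distortion-continuity}, $s\mapsto D_{q,b}(s)/s$ is continuous, so at $s=t(q)$ the distortion equals $\eta' t(q)$ exactly, while it is $<\eta' s$ for all $s\in(t(q),1]$. The main task is to show that (a) $t(q)>0$ for a large-measure set of $q$, (b) the scales are summable in the $k$-dimensional sense, and (c) at scale $t(q)$ the ball looks like $\R^k\times Y$ with $Y$ nontrivial at size $\sim t$.

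For (a), I would show that at every point $q$ where the Hardy--Littlewood maximal function of $|\Hess b|^2+|G-I|$ is small at all scales, $b$ stays a good splitting on every $B_s(q)$. Splitting maps with small scale-invariant Hessian are almost isometries transversally, so $D_{q,b}(s)$ cannot reach $\eta' s$ if $\Vol$ does not collapse to a lower-dimensional $\R^k$ picture. Here I expect that the key dichotomy is different: since $n>k$, at very small scales $B_s(q)$ looks like $\R^n$, so $b\colon\R^n\to\R^k$ with $n>k$ must have distortion $\gtrsim s$. Hence $t(q)>0$ for every $q$, which is the \cite[Lemma 2.1]{KapovitchWilking} type statement. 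The maximal function argument instead gives good control for $q$ in a set $G'$ with $\Vol(G')\geq c\Vol B_1(p)$ (weak-type $(1,1)$ plus doubling). On $G'$, above $t(q)$, $b$ is an $\eta's$-GH approximation to $\R^k$ for all $s\in(t(q),1]$.

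For (c), at the critical scale, Gromov precompactness together with the splitting nature of $b$ on $B_{A t}(q)$ shows that $B_{At}(q)$ is $\eta t$-close to a ball in $\R^k\times Y$. Here I would use that $b$ is still a good harmonic splitting map at scale $At$ (by maximal-function control at $q$, provided $At\geq t(q)$ via choosing the sup above $A t$ appropriately, i.e.\ defining the critical scale with factor $A$) and Cheeger--Colding's converse: a $k$-splitting map with small Hessian integral gives an almost $\R^k$ factor. Equality $D_{q,b}(t)=\eta' t$ forces the $Y$-factor to contain a point $a$ with $d_Y(y,a)\geq c_0 t$, since otherwise $b$ would be an $\eta' t/2$-approximation on $B_t(q)$. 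This compactness step fixes $\delta_c$ and $\eta'$, and I expect it to be the main obstacle: making the contradiction argument uniform in the unknown scale $t$ requires rescaling by $t^{-1}$, so the maximal-function bounds must be scale-invariant.

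For (b), I would apply the Vitali covering lemma to $\{B_{t(q)}(q)\}_{q\in G'}$, which gives disjoint balls $B_{t_j}(q_j)$ with $G'\subset\bigcup_j B_{5t_j}(q_j)$; finiteness follows from $t(q)$ being bounded below on compact sets by continuity and smoothness. Set $E_j$ to be the disjointified pieces $G'\cap B_{5t_j}(q_j)\setminus\bigcup_{i<j}B_{5t_i}(q_i)$. For $\sum t_j^k\leq C$, push forward by $b$: since $b$ is an $\eta' s$-isometry to $\R^k$ for $s$ slightly above $t_j$, and the balls $B_{t_j}(q_j)$ are disjoint, the images $b(B_{t_j/2}(q_j))$ should contain almost disjoint Euclidean balls of radius $\sim t_j$ in $B^{\R^k}_1(0)$. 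Disjointness of the images (not just of the domains) again uses that $b$ is almost isometric at the scale $\max(t_i,t_j)$ near both centers. Euclidean volume counting then gives $\sum t_j^k\leq C(k,n)$.
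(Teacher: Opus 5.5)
Your proposal is correct and mostly follows the paper's proof. The shared steps are:
\begin{itemize}
\item the critical scale is the largest $s$ with $D_{q,b}(s)=\eta' s$ (the paper's $\rho_i(q)$, with threshold $c_*$);
\item its positivity comes from $k<n$;
\item the good set is a maximal-function set on which $b$ is a scale-invariantly good splitting map at every scale $\le 3/4$;
\item (iii) is a blow-up at the critical scale via \cite[Lemma~2.1]{KapovitchWilking}, and the nontriviality of $Y$ is read off from the equality $D_{q,b}(t)=\eta' t$.
\end{itemize}

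The genuine difference is the covering in (ii). The paper applies Vitali \emph{in the target}, to the Euclidean balls $B^{\R^k}_{\rho(q)/10}(b(q))$. There $\sum_j t_j^k\le C$ is free, and the localization $E_j\subset B_{Ct_j}(q_j)$ is what costs an argument: \eqref{eq:anchored-cell}, which uses maximality of $\rho$. You apply Vitali \emph{in $M$} to $\{B_{t(q)}(q)\}$, so localization is free ($C=5$) and summability costs the mirror argument. Both directions rest on the same fact: $b$ is an almost isometric embedding above the critical scale.

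Your summability step should be phrased as separation of centers, not as ``images containing Euclidean balls''. Suppose $B_{t_i}(q_i)\cap B_{t_j}(q_j)=\emptyset$ with $t_i\le t_j$. Then $d:=d(q_i,q_j)\ge t_i+t_j>t_j$ and $d<1/2$. So $D_{q_j,b}(d)<\eta' d$, which gives $|b(q_i)-b(q_j)|\ge(1-\eta')(t_i+t_j)$. For $\eta'\le1/2$ the balls $B^{\R^k}_{t_j/2}(b(q_j))$ are therefore pairwise disjoint inside a fixed ball of radius $C(k,n)$. Your route also gives $N<\infty$ directly from the smooth lower bound $t(q)\ge s_0>0$ on $\overline B_{1/4}(p)$. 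The paper instead truncates a countable family, keeping half the volume.

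There are three bookkeeping corrections.
\begin{itemize}
\item $\eta'$ must depend only on $n$, as $c_*$ does, not on $(A,\eta)$. Your $c_0\sim\eta'$ has to be $c_0(n)$, because Proposition~\ref{prop:rank-improvement} fixes $\vartheta$, $A$, $\eta$ in terms of $c_0$ before choosing $\delta_c(n,k,A,\eta)$. Nothing in your argument needs that dependence: the $\eta t$-closeness at scale $At$ comes from the blow-up, i.e.\ from $\delta_c$.
\item Instead of a critical scale ``with factor $A$'', you only need $\sup_q t(q)\le\eta'^{-1}\operatorname{dis}(b|_{\overline B_2(p)})\to0$ along the contradiction sequence. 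This keeps $At(q)$ inside the range where the maximal-function bounds hold, and it also guarantees $D_{q,b}(1)<\eta'$, so $t(q)<1$ is attained.
\item Drop your preliminary claim that a small maximal function prevents $D_{q,b}(s)$ from reaching $\eta' s$. As you yourself observe, it is false.
\end{itemize}
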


\begin{proof}
Set $c(n):=4^{-n-1}$, $c_*(n):=10^{-n^2}$, $c_0(n):=c_*(n)/4$ and fix $C(k,n)>1$ large enough so that
\begin{equation}\label{eqn:CLarge}
1-C(k,n)^{-1}/2>5c_*/4, \qquad C(k,n)\geq \alpha(k,n)(2\sqrt{k}\mathcal{K}(n)+1)^k,
\end{equation}
where $\mathcal{K}(n)$ is the constant in the gradient bound stated before \eqref{eq:harmonic-bounds}, and $\alpha(k,n)$ is chosen large enough, only depending on $k,n$ (in order for \eqref{eq:sum-variable-scales} below to hold).

The case $k=0$ is elementary. In that case, the assertion is true choosing $\delta_c\ll 1$ and: $N=1$, $G=E_1=B_{1/4}(p)$, $q_1=p$, $t_1=\diam M<\infty$, $(Y_1,d_{Y_1},y_1)=(M,d_g,p)$ and $a_1\in M$ such that $d(a_1,p)\geq \diam M/2$. Hence, from now on, assume $k\geq 1$. 

Suppose that
no $\delta_c>0$ works with the choices of constants above. Taking $\delta_i\to 0$
we thus have a contradicting sequence
\[
        (M_i,g_i,p_i)\longrightarrow(\R^k,g_{\rm eu},0)
\]
with $\mathrm{Ric}_{g_i}\geq -\delta_i$ for which the conclusion is false. 
Proposition
\ref{prop:harmonic-replacement} (see Remark \ref{rem:GHisometry}) and a
diagonal argument give radii $R_i\to\infty$ and harmonic maps
\[
 b_i=(b_i^1,\ldots,b_i^k):B_{R_i}(p_i)\longrightarrow\mathbb R^k,
 \qquad b_i(p_i)=0,
\]
such that the following hold:
\begin{enumerate}[label=\textup{(\alph*)},leftmargin=2.3em]
\item For every $R>0$, the maps $b_i$ have a uniform Lipschitz bound on $B_{R}(p_i)$ for $i$ large enough, and their distance distortion on $B_{R}(p_i)$ tends to
zero as $i\to \infty$;
\item Let  $h_i:=\sum_{a,b=1}^k
 \left|\langle\nabla b_i^a,\nabla b_i^b\rangle-\delta_{ab}\right|^2
       +\sum_{a=1}^k|\Hess b_i^a|^2$. For every fixed $R\geq 2$,
\[
\frac{1}{\Vol B_R(p_i)}\int_{B_R(p_i)}h_i\,dV_{g_i}\longrightarrow0.
\]
\end{enumerate}
Set $
 \alpha_i:=i^{-1}+
 \left(\frac1{\Vol B_2(p_i)}\int_{B_2(p_i)}h_i\,dV_{g_i}\right)^{1/2}>0$, and define
\[
 H_i:=\left\{q\in B_{1/4}(p_i):
       \sup_{0<r\leq3/4}\frac{1}{\Vol B_r(q)}\int_{B_r(q)}h_i\,dV_{g_i}
       \leq\alpha_i\right\}.
\]
Notice $\alpha_i\to 0$. A standard maximal-function estimate and Bishop--Gromov comparison imply
\begin{equation}\label{eq:Hi-large}
 \frac{\Vol(B_{1/4}(p_i)\setminus H_i)}{\Vol B_1(p_i)}\longrightarrow0,
 \qquad
 \Vol(H_i)\geq\frac12\,4^{-n}\Vol B_1(p_i),
\end{equation}
for all sufficiently large $i$.

For $q\in H_i$ define the distortion, for $r\in (0,3/4]$,
\[
 D_{q,b_i}(r):=
 \sup_{x,y\in\overline B_r(q)}
 \left|d_i(x,y)-|b_i(x)-b_i(y)|\right|.
\]
Let $\rho_i(q)$ be the largest real number $r$ in $(0,3/4]$ such that $D_{q,b_i}(r)=c_*r$.
It is readily seen that such an $r$ exists for $i$ large enough. Indeed, since $k<n$, one has $\liminf_{r\to 0}\frac{D_{q,b_i}(r)}r\geq1$. On the other hand, property \textup{(a)} above
gives, uniformly for $q\in H_i$, $
                     D_{q,b_i}(3/4)<3c_*/4$, 
for all sufficiently large $i$.  Lemma \ref{lem:distortion-continuity}
therefore gives the existence of such an $r$. Moreover, notice
$ \sup_{q\in H_i}\rho_i(q)
 \leq c_*^{-1}\operatorname{dis}
       \bigl(b_i|_{\overline B_1(p_i)}\bigr)
 \to 0$ as $i\to \infty$.

We next claim that for every sufficiently large $i$ and every $q\in H_i$ there are a pointed proper geodesic space $(Y,d_Y,y)$ and
$a\in Y$ such that
\begin{equation}\label{eq:individual-product}
 d_Y(y,a)\geq c_0,
 \qquad
 d_{GH}\!\left(
 B_A^{(M_i,\rho_i(q)^{-2}g_i)}(q),
 B_A^{\mathbb R^k\times Y}((0,y))
 \right)\leq\eta.
\end{equation}
Indeed, if this failed, choose violating points $q_i\in H_i$ and put
\[
 \widetilde g_i:=\rho_i(q_i)^{-2}g_i,
 \qquad u_i:=\rho_i(q_i)^{-1}(b_i-b_i(q_i)).
\]
By what was proved above $\rho_i(q_i)\to0$.  Fix $R>0$.
For $i$ large enough, the definition of $H_i$ gives
\begin{align}
 \frac{1}{\Vol^{\widetilde g_i}(B_R^{\widetilde g_i}(q_i))}\left(\int_{B_R^{\widetilde g_i}(q_i)}
 \left(\sum_{a,b=1}^k
 \left|\langle\nabla u_i^a,\nabla u_i^b\rangle-\delta_{ab}\right|^2 + 
 \sum_{a=1}^k|\Hess_{\widetilde g_i}u_i^a|^2\right)
 \,dV_{\widetilde g_i}\right)
 \leq (1+\rho_i(q_i)^2)\alpha_i.\label{eq:scaled-Gram}
\end{align}
Hence, using \eqref{eq:scaled-Gram}, we can apply the result in \cite[Lemma 2.1]{KapovitchWilking} to get, after passing to subsequences,
\begin{equation}\label{eq:critical-product-limit}
 (M_i,\widetilde g_i,q_i,u_i)
 \longrightarrow
 (\mathbb R^k\times Y,(0,y),\operatorname{pr}_{\mathbb R^k}),
\end{equation}
for a pointed proper geodesic space $(Y,y)$, where $\operatorname{pr}_{\mathbb R^k}$ denotes the projection onto the $\R^k$ factor.

Choose $x_i^-,x_i^+\in\overline B_1^{\widetilde g_i}(q_i)$ realizing
the supremum in $D_{q_i,b_i}(\rho_i(q_i))$.  After passing to a subsequence, their
limits are $x^\pm=(v^\pm,y^\pm)$ in
$\mathbb R^k\times Y$.  Convergence in
\eqref{eq:critical-product-limit}, and the fact that $D_{q_i,b_i}(\rho_i(q_i))=c_*\rho_i(q_i)$ give
\[
 \left|
 d_{\mathbb R^k\times Y}(x^-,x^+)-|v^--v^+|
 \right|=d_{\mathbb R^k\times Y}(x^-,x^+)-|v^--v^+|=c_*.
\]
Using the product formula for $d_{\mathbb R^k\times Y}$, we thus deduce $d_Y(y^-,y^+)\geq c_*$. Hence, at least one of $y^-,y^+$ lies at distance at least $c_*/2$ from
$y$.  Since $Y$ is geodesic, there is therefore a point $a\in Y$ with
$d_Y(y,a)=c_0=c_*/4$.  This and
\eqref{eq:critical-product-limit} contradict the assumed failure of
\eqref{eq:individual-product}.  Thus \eqref{eq:individual-product}
holds uniformly over $H_i$, as desired.

We now construct the $E_j$'s.  Apply Vitali's covering Lemma to $
 \left\{B_{\rho_i(q)/10}^{\mathbb R^k}(b_i(q)):q\in H_i\right\}$. 
We thus have a countable pairwise disjoint subfamily,
indexed by points $q_{i,j}\in H_i$, whose  five-times-enlargements cover
$b_i(H_i)$. Since the functions $b_i$ are $\sqrt{k}\mathcal{K}(n)$-Lipschitz on $B_1(p_i)$, all the pairwise disjoint
balls $B_{\rho_i(q_{i,j})/10}(b_i(q_{i,j}))$ lie in
$B_{2\sqrt{k}\mathcal{K}(n)+1}^{\mathbb R^k}(0)$.  Euclidean volume comparison gives (recall \eqref{eqn:CLarge}), for a constant $\beta(k,n)$,
\begin{equation}\label{eq:sum-variable-scales}
 \sum_j\rho_i(q_{i,j})^k
 \leq \beta(k,n) \mathcal L^k\bigl(B_{2\sqrt{k}\mathcal{K}(n)+1}^{\mathbb R^k}(0)\bigr)\leq C(n,k).
\end{equation}

The measurable sets $
 F_{i,j}:=H_i\cap b_i^{-1}
 \left(B_{\rho_i(q_{i,j})/2}^{\mathbb R^k}(b_i(q_{i,j}))\right)$
cover $H_i$.  Define $E_{i,1}:=F_{i,1}$ and $E_{i,j}:=F_{i,j}\setminus \cup_{\ell<j}F_{i,\ell}$ for $j>1$. Notice that $\{E_{i,j}\}_j$ are pairwise disjoint. We claim that
\begin{equation}\label{eq:anchored-cell}
                              E_{i,j}\subset B_{C\rho_i(q_{i,j})}(q_{i,j}).
\end{equation}
If $x\in E_{i,j}\subset F_{i,j}$ and $D=d(x,q_{i,j})\geq C\rho_i(q_{i,j})$, then $D\leq1/2$ (recall $H_i\subset B_{1/4}(p_i)$) and
\eqref{eqn:CLarge} gives
\[
 D_{q_{i,j},b_i}(5D/4)
 \geq D-|b_i(x)-b_i(q_{i,j})|
 \geq D-\rho_i(q_{i,j})/2
 >c_*\,5D/4.
\]
Since $5D/4<3/4$, the latter inequality and
$D_{q_{i,j},b_i}(3/4)<3c_*/4$ (for $i$ large enough) together with
Lemma \ref{lem:distortion-continuity}, gives a 
contradiction to the definition of $\rho_i(q_{i,j})$ and proves
\eqref{eq:anchored-cell}. Indeed, by the previous inequalities and Lemma \ref{lem:distortion-continuity}, there is $r\in (5D/4,3/4)$ such that $D_{q_{i,j},b_i}(r)=c_*r$. Consequently, by definition of $\rho_{i}(q_{i,j})$, we have $\rho_i(q_{i,j})\geq r>5D/4>D\geq C\rho_{i}(q_{i,j})>\rho_{i}(q_{i,j})$, a contradiction.

Now, let us retain
finitely many $\{E_{i,j}\}_{j=1}^{N_i}$ and set $G_i:=\sqcup_{j=1}^{N_i} E_{i,j}$ so that $\Vol(G_i)\geq \Vol(H_i)/2$. For each $q_{i,j}$, denote $\widehat Y_{i,j}:=(Y_{i,j},\rho_i(q_{i,j})d_{Y_{i,j}},y_{i,j})$ where $Y_{i,j}$ is the pointed metric space in \eqref{eq:individual-product} associated with $q_{i,j}$; retain also the associated points $a_{i,j}\in \widehat Y_{i,j}$. Now, for one fixed sufficiently large $i$, equations
\eqref{eq:Hi-large}, \eqref{eq:individual-product},
\eqref{eq:sum-variable-scales}, and \eqref{eq:anchored-cell} give
all three conclusions of the proposition with $G=G_i$, $N=N_i$, $q_j=q_{i,j}$, $t_{j}=\rho_i(q_{i,j})$, $E_j=E_{i,j}$, $a_{i,j}\in \widehat Y_{i,j}$ defined above, thus resulting in a contradiction.
\end{proof}

We now prove the second core ingredient of the proof. 

\begin{proposition}
\label{prop:rank-improvement}
Fix an integer $n\geq2$, integers $m,k$ such that $0\leq m\leq n-2$, $0\leq k\leq m$, and fix
$0<\eps<1$.  There are
$0<\delta=\delta(n,m,k,\eps)<1$ and
$c=c(n,m,k,\eps)>0$ such that the following holds. Let $(M^n,g)$ be a complete Riemannian manifold, $p\in M$ and $r>0$. If
$\Ric\geq -\delta r^{-2}$ on $M$ and $B_r(p)$ is $(k,\delta)$-split, then there are
$q\in M$ and $0<s\leq r$ such that $B_s(q)$ is
$(k+1,\eps)$-split and
\begin{equation}\label{eq:rank-volume}
                        \Ex_m(q,s)\geq c\Ex_m(p,r).
\end{equation}
\end{proposition}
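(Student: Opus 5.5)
We rescale so that $r=1$. The plan has two cases according to the residual factor $Z$ in the $(k,\delta)$-splitting of $B_1(p)$. In the first case $Z$ is tiny at the relevant scale, $B_1(p)$ is close to $\R^k$ alone, and Proposition~\ref{prop:critical-package} applies. In the second case $Z$ contains a non-trivial segment, and Lemma~\ref{lem:finite-segment} gives an extra splitting direction directly. In both cases the $(k+1,\eps)$-split ball is produced by Lemma~\ref{lem:finite-segment}, applied at the midpoint of a segment in the residual factor. The volume ratio is tracked by Bishop--Gromov together with a pigeonhole argument on the cells $E_j$.

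\emph{Constants.} Let $L=L(n,k,\eps)$ and $\delta_s=\delta_s(n,k,\eps)$ be as in Lemma~\ref{lem:finite-segment}, and let $c_0=c_0(n)$ be as in Proposition~\ref{prop:critical-package}. Set $s_*:=c_0/(4L)$, and apply Proposition~\ref{prop:critical-package} with $A:=10(L+C(k,n))$ and $\eta:=\delta_s s_*/2$. This yields $\delta_c$. Finally take $\delta\ll\delta_c$, with $\delta$ also small with respect to $\delta_s$, $L$ and $\delta_c$ in the ways needed below.

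\emph{Case 1: $Z$ is not small.} Suppose some point of $B_{\delta^{-1}}(z)\subset Z$ lies at distance at least $\delta_c/2$ from $z$. Because $Z$ is geodesic, there is a point $z'$ at distance $\delta_c/4$ from $z$ which is the midpoint of a minimizing segment of half-length $Ls$, where $s:=\delta_c/(16L)$. Choose $q\in M$ corresponding to $(0,z')$ under the approximation, so that $d(p,q)\le 1$. For $\delta\ll\delta_s s$, the ball $\overline B_{Ls}(q)$ is $\delta_s s$-close to the corresponding ball in $\R^k\times Z$. Lemma~\ref{lem:finite-segment} then shows that $B_s(q)$ is $(k+1,\eps)$-split. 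Since $s$ depends only on $(n,m,k,\eps)$, Bishop--Gromov under $\Ric\ge-\delta$ gives $\Vol B_s(q)\ge c\Vol B_2(q)\ge c\Vol B_1(p)$, which is \eqref{eq:rank-volume}.

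\emph{Case 2: $Z$ is small.} Otherwise the $Z$-factor has diameter at most $\delta_c$ inside the $\delta^{-1}$-ball, so $d_{GH}(B_{\delta_c^{-1}}(p),B^{\R^k}_{\delta_c^{-1}}(0))\le\delta_c$. Proposition~\ref{prop:critical-package} then produces $G=\sqcup_j E_j$ and scales $t_j$. We pigeonhole: if $\Vol E_j<(c/C)\,t_j^k\Vol B_1(p)$ held for every $j$, then $\sum_j t_j^k\le C$ would force $\Vol G<c\Vol B_1(p)$, which is false. So for some $j$,
\[
\Vol B_{Ct_j}(q_j)\ge \Vol E_j\ge (c/C)\,t_j^k\Vol B_1(p).
\]
Since $t_j\le1$ and $k\le m$, this gives
\[
\Ex_m(q_j,Ct_j)\ge c'\,t_j^{k-m}\,\Ex_m(p,1)\ge c'\,\Ex_m(p,1).
\]
This is where the restriction $k\le m$ is used. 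In $Y_j$, the point $a_j$ satisfies $d_{Y_j}(y_j,a_j)\ge c_0t_j$. Taking the midpoint of a geodesic from $y_j$ toward $a_j$ gives a segment of half-length $Ls$ with $s:=s_*t_j$, centred within $t_j$ of $y_j$. Let $q\in M$ correspond to this midpoint. Because $A\gg L$ and the error is $\eta t_j\le\delta_s s$, Lemma~\ref{lem:finite-segment} applies. Its curvature hypothesis holds since $\delta\le\delta_s\le\delta_s s^{-2}$ for $s\le1$. Hence $B_s(q)$ is $(k+1,\eps)$-split. Bishop--Gromov at scale at most $C$ under $\Ric\ge-\delta$ compares $\Vol B_s(q)$ with $\Vol B_{Ct_j}(q_j)$ up to a factor depending only on $s_*$ and $C$, and this gives \eqref{eq:rank-volume}.

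\emph{Expected obstacle.} The delicate point is the reduction in the dichotomy: we must pass from a $(k,\delta)$-split ball with an arbitrary residual $Z$ to the pure $\R^k$-closeness hypothesis of Proposition~\ref{prop:critical-package}. This has to be done while choosing $\delta$, $s$ and $\eta$ consistently, with all of them depending only on $(n,m,k,\eps)$.
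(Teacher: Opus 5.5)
Your proposal is correct and follows essentially the same route as the paper. It uses the same dichotomy on the residual factor $Z$: a segment of definite length in $Z$ is fed to Lemma~\ref{lem:finite-segment}, and Proposition~\ref{prop:critical-package} is used when $Z$ is small. It then uses the same pigeonhole on the cells via $\sum_j t_j^m\le\sum_j t_j^k\le C$ (for $k\le m$, $t_j\le 1$), and the same Bishop--Gromov transfer of the volume ratio.

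The only thing to tighten is constant bookkeeping. Restricting and recentering a Gromov--Hausdorff approximation costs a universal factor $C_1$. So you should take $\eta\le\delta_s s_*/C_1$ rather than $\delta_s s_*/2$. You should also use a smaller threshold in the dichotomy (the paper uses $\delta_c/20$), so that the Case~2 estimate $d_{GH}\le\delta_c$ leaves room for the additional $C_1\delta$ error.
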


\begin{proof}
All constants denoted by $c,C,C'>0$ in this proof depend only on
$n,m,k,\varepsilon$ and may change from line to line. It is enough to prove the rescaled version with
$r=1$. So, from now on, let us assume $r=1$.
Notice
\[
                         \Ex_m(p,1)=\Vol B_1(p).
\]
Assume $B_{1}(p)$ is $(k,\delta)$-split. Let
\begin{equation}\label{eqn:Original}
 \Phi:B_{\delta^{-1}}(p)\longrightarrow
 B_{\delta^{-1}}((0,z))\subset\mathbb R^k\times Z
\end{equation}
be the $\delta$-approximation given by  Definition \ref{def:metric-split}.
\smallskip

The proof is divided into two cases. If the residual factor $Z$ has a long enough segment through $z$, Lemma \ref{lem:finite-segment} directly produces an additional Euclidean direction. Otherwise Proposition
\ref{prop:critical-package} gives a collection of
good cells $E_j$ in $M$ at different scales, with bounded diameter.  Since their scales are summable, we can select one cell carrying a
definite fraction of the $m$-volume ratio. A segment in the space $Y_j$ associated to such a cell supplies the additional Euclidean direction. Let us carefully perform this strategy.
\smallskip

Let $L(n,k,\varepsilon)>\varepsilon^{-1}>1,0<\delta_s(n,k,\varepsilon)<1$ be the constants in Lemma \ref{lem:finite-segment}, and let $c_0$ be the constant in Proposition \ref{prop:critical-package}. Choose
\[
 0<\vartheta\le c_0/(32L),\quad
 0<\eta\le\min\{1/2,\delta_s\vartheta/C_1\},\quad
 A>c_0/4+L\vartheta+2,\quad
 d_0=\delta_c/20,\quad R_0=4\delta_c^{-1},
\]
where $C_{1}$ is a large universal constant that controls restriction and recentering of a
Gromov--Hausdorff approximation, and $\delta_c(n,k,A,\eta)<1$ is the corresponding constant in
Proposition \ref{prop:critical-package}. Finally choose $\delta$ sufficiently
small so that $0<\delta\leq \min\{1,\delta_s,\delta_c\}$ and:
\begin{equation}\label{eqn:ChoiceOfDelta}
    1+\frac{3d_0}{4}\leq \delta^{-1}, \qquad C_1\delta\leq \frac{\delta_s d_0}{4L}, \qquad \delta_c^{-1}<\frac{\delta^{-1}+1}{10}, \quad C_1\delta\leq \frac{\delta_c}{2}.
\end{equation}

\smallskip
\noindent
\emph{Case 1: There is a long enough segment through $z$ in $Z$.}
Suppose that some $z'\in B_{R_0}(z)$ satisfies
$d_Z(z,z')\geq d_0$. Let $\gamma:[0,d_0]\to Z$ be the unit-speed initial segment of a minimizing geodesic from $z$ to $z'$. Set
\[
 \bar z:=\gamma(d_0/2),
 \qquad
 s_0:=\frac{d_0}{4L}.
\]
Since $Ls_0=d_0/4$, the restriction of $\gamma$ to
$[d_0/4,3d_0/4]$, reparametrized on $[-Ls_0,Ls_0]$, is a minimizing
segment centered at $\bar z$. Choose $q\in M$ corresponding to $(0,\bar z)$ under the approximation \eqref{eqn:Original}. First
$ \overline B_{Ls_0}((0,\bar z))
 \subset \overline B_{3d_0/4}((0,z))$,
so this ball lies inside the original comparison region since $1+3d_0/4\le \delta^{-1}$ (see \eqref{eqn:ChoiceOfDelta}). Restriction
and recentering give
\[
 d_{GH}\!\left(
 \overline B_{Ls_0}(q),
 \overline B_{Ls_0}((0,\bar z))
 \right)
 \leq C_1\delta,
\]
for a universal constant $C_1>0$.
Since
 $C_{1}\delta\leq\delta_s s_0$ (see \eqref{eqn:ChoiceOfDelta}), and $\mathrm{Ric}\geq -\delta\geq -\delta_ss_0^{-2}$ (using $s_0\leq 1$) all the hypotheses of Lemma \ref{lem:finite-segment} are
satisfied, and consequently $B_{s_0}(q)$ is
$(k+1,\varepsilon)$-split.

Since $p$ and $q$ correspond respectively to $(0,z)$ and
$(0,\bar z)$ in the approximation given by \eqref{eqn:Original},
\[
 d(p,q)\leq d_Z(z,\bar z)+C_{\rm 1}\delta
          =\frac{d_0}{2}+C_{\rm 1}\delta
          \leq C(n,m,k,\varepsilon),
\] 
and hence $B_1(p)\subset B_{C}(q)$ for a possibly larger constant $C>\max\{1,s_0\}$.
Bishop--Gromov monotonicity then gives, with $c:=c(n,m,k,\varepsilon)$ possibly changing from line to line
\[
 \begin{split}
 \Ex_m(q,s_0)
 &=\frac{\Vol B_{s_0}(q)}{s_0^m}
 \geq \frac{c}{s_0^m}\Vol B_C(q)
 \geq c\,\Vol B_1(p)
 =c\Ex_m(p,1).
 \end{split}
\]
We thus proved that $B_{s_0}(q)$ is $(k+1,\varepsilon)$-split and $\Ex_m(q,s_0)\geq c\Ex_m(p,1)$. Hence, the proof is concluded in the first case. Notice that we have $s_0=d_0/4L\leq 1=r$, as desired.

\smallskip
\noindent
\emph{Case 2: The residual factor $Z$ is macroscopically small.}
Suppose now that every point of $B_{R_0}(z)$ lies within $d_0$ of
$z$. Since $\delta_c^{-1}<R_0$, projection onto the Euclidean factor
gives
\[
 d_{GH}\!\left(
 B_{\delta_c^{-1}}^{\mathbb R^k\times Z}((0,z)),
 B_{\delta_c^{-1}}^{\mathbb R^k}(0)\right)\leq2d_0.
\]
Hence, using \eqref{eqn:Original} and the third inequality in \eqref{eqn:ChoiceOfDelta}
\[
 d_{GH}\!\left(
 B_{\delta_c^{-1}}(p),
 B_{\delta_c^{-1}}^{\mathbb R^k}(0)\right)\leq\delta_c,
\]
where we also used $C_1\delta\leq \delta_c/2$, see \eqref{eqn:ChoiceOfDelta}. Moreover $\mathrm{Ric}\geq - \delta\geq -\delta_c$. 
Apply Proposition \ref{prop:critical-package}, and denote its output
by $G$ and $\{q_j,t_j,E_j,Y_j,y_j,a_j\}_{j=1}^N$.
Since $k\leq m$ and $0<t_j\leq1$, we have $
                    \sum_{j=1}^N t_j^m
                    \leq\sum_{j=1}^N t_j^k\leq C$. 
Consequently, for some $j$
\[
 \frac{\Vol(E_j)}{t_j^m}
 \geq\frac{\Vol(G)}{\sum_{\ell=1}^N t_\ell^m}
 \geq c\Vol B_1(p).
\]
Since $E_j\subset B_{Ct_j}(q_j)$, we have the following inequality where the constant $c$ might change from line to line
\begin{equation}\label{eq:heavy-cell}
 \Ex_m(q_j,Ct_j)
 =\frac{\Vol B_{Ct_j}(q_j)}{(Ct_j)^m}
 \geq c\frac{\Vol(E_j)}{t_j^m}
 \geq c\Vol B_1(p)=c\Ex_m(p,1).
\end{equation}

Let $\gamma:[0,c_0t_j/2]\to Y_j$ be the initial part of a minimizing
geodesic from $y_j$ to $a_j$, and put $
                       \bar y:=\gamma(c_0t_j/4)$, and $s:=\vartheta t_j$.
Thus $\bar y$ is the midpoint of a segment of half-length $c_0t_j/4$.
Since $Ls=L\vartheta t_j\leq c_0t_j/32$, the central subsegment of
half-length $Ls$ satisfies the segment hypothesis of
Lemma \ref{lem:finite-segment}.  Moreover, by the choice of $A$
\[
 \overline B_{Ls}((0,\bar y))
 \subset
 \overline B_{(c_0/4+L\vartheta)t_j}((0,y_j))
 \subset \overline{B}_{At_j}((0,y_j)).
\]
Choose $q\in M$ corresponding to $(0,\bar y)$ in the product
approximation given by Proposition \ref{prop:critical-package} at $q_j$.  Restriction and recentering give
\[
 d_{GH}\!\left(
 \overline B_{Ls}(q),
 \overline B_{Ls}((0,\bar y))
 \right)
 \leq C_1\eta t_j
 \leq\delta_s\vartheta t_j=\delta_ss.
\]
Since $\mathrm{Ric}\geq - \delta\geq -\delta_ss^{-2}$ (using $s=\vartheta t_j\leq 1$), Lemma \ref{lem:finite-segment} therefore shows that
$B_s(q)=B_{\vartheta t_j}(q)$ is $(k+1,\varepsilon)$-split.

The proof is now concluded as in Case 1: we have $d(q,q_j)\leq Ct_j$, so
$B_{Ct_j}(q_j)\subset B_{C't_j}(q)$. Bishop--Gromov monotonicity and
\eqref{eq:heavy-cell} yield, possibly by changing constants $c,C$ from line to line,
\[
 \begin{split}
 \Ex_m(q,\vartheta t_j)
 &=
   \frac{\Vol B_{\vartheta t_j}(q)}{(\vartheta t_j)^m}\\
 &\geq
   \frac{c}{(\vartheta t_j)^m}
   \Vol B_{C't_j}(q)\geq c\frac{\Vol B_{Ct_j}(q_j)}{t_j^m}
 \geq c\Ex_m(q_j,Ct_j)
 \geq c\Ex_m(p,1),
 \end{split}
\]
thus concluding the proof. Notice that we also have $\vartheta t_j\leq 1=r$, as desired.
\end{proof}
\begin{remark}\label{rem:mms}
    Proposition \ref{prop:rank-improvement} is likely to work, with minimal changes, in the setting of metric-measure spaces $(X,\mathrm{d},\mathcal{H}^n)$ that are $\mathrm{RCD}(-\delta r^{-2},n)$. In particular, in the proof of Theorem \ref{thm:main}, the smoothness of $M$ and the lower bound on the intermediate curvature enters into play only in the computations of Proposition \ref{prop:hodge-bound}, especially when one uses the equality \eqref{eq:mixed-trace}. It would be interesting to understand how (and to which extent) these computations can be adapted in the metric-measure space setting.  We do not pursue this line of research here.
\end{remark}

\section{Proof of the results}

\begin{proof}[Proof of Theorem \ref{thm:main}]
Let $\eps_{m+1}:=\eps_H(n,m)$ be the constant in
Corollary~\ref{cor:bounded-splitting}. For
$k=m,m-1,\ldots,0$, choose recursively
\[
 \eps_k:=\delta(n,m,k,\eps_{k+1}),
 \qquad
 c_k:=c(n,m,k,\eps_{k+1}),
\]
where $\delta$ and $c$ are the constants in
Proposition~\ref{prop:rank-improvement}. Set
\[
 \nu(n,m):=\min\{1,\sqrt{\eps_0},\ldots,\sqrt{\eps_{m+1}}\}>0.
\]

Fix a ball $B_R(p)\subset M$ such that $\delta R\leq\nu(n,m)$ (Here, $\delta$ is the one in the statement of Theorem \ref{thm:main}).
Recall that the output scale $s$ in
Proposition~\ref{prop:rank-improvement} is such that $0<s\leq r$. Since $B_R(p)$ is
$(0,\eps_0)$-split, successive applications of Proposition~\ref{prop:rank-improvement} give
balls $B_{r_k}(p_k)$, $0\leq k\leq m+1$, such that
\[
 r_0=R,\qquad p_0=p,
 \qquad
 R=r_0\geq r_1\geq\cdots\geq r_{m+1}>0,
\]
$B_{r_k}(p_k)$ is $(k,\eps_k)$-split, and
\begin{equation}\label{eq:induction-excess}
 \Ex_m(p_{k+1},r_{k+1})
 \geq c_k\Ex_m(p_k,r_k)
 \qquad \forall 0\leq k\leq m.
\end{equation}
Indeed, at the $k$th step, the choice of $\nu(n,m)$ and $r_k\leq R$ give $
 \delta^2r_k^2\leq\delta^2R^2
 \leq\nu(n,m)^2\leq\eps_k$ 
and hence $
 \Ric_g\geq-\delta^2g\geq-\eps_k r_k^{-2}g$
which is the Ricci hypothesis required in
Proposition~\ref{prop:rank-improvement}.

The final ball is $(m+1,\eps_H)$-split and, again by the choice of
$\nu(n,m)$, we have  $\delta^2r_{m+1}^2\leq\eps_{m+1}=\eps_H(n,m)$.
Therefore Corollary~\ref{cor:bounded-splitting} gives $
 r_{m+1}\leq C(n,m)\sigma^{-1}$. 
Moreover, Bishop volume comparison and
$\delta r_{m+1}\leq\delta R\leq\nu(n,m)\leq 1$ give $
 \Vol B_{r_{m+1}}(p_{m+1})
 \leq C(n,m)r_{m+1}^n$.
Consequently,
\[
 \Ex_m(p_{m+1},r_{m+1})
 \leq C(n,m)r_{m+1}^{n-m}
 \leq C(n,m)\sigma^{-(n-m)}.
\]
Iterating \eqref{eq:induction-excess} yields the sought conclusion
\[
 \frac{\Vol B_R(p)}{R^m}
 =\Ex_m(p_0,r_0)
 \leq\left(\prod_{k=0}^{m}c_k^{-1}\right)
       \Ex_m(p_{m+1},r_{m+1})
 \leq C(n,m)\sigma^{-(n-m)}. \qedhere
\]
\end{proof}

\begin{remark}
    It is likely that the techniques developed in this note can be used to show versions of Theorem \ref{thm:main} where the positive intermediate curvature condition $\mathcal{C}_{m+1}\geq \sigma^2$ is weakened to a lower bound that goes to $0$ at infinity like a (properly chosen) negative power of the distance from a point (cf., the results in \cite{MunteanuWang, ChodoshLiStryker}). Since this is out of the scope of this note, we do not pursue it here.
\end{remark}

\begin{proof}[Proof of Theorem \ref{cor:noncollapsed-urysohn-width}]
The case $m=0$ is a consequence of Bonnet--Myers theorem. We may
therefore assume $1\leq m\leq n-2$.

We will be choosing $\delta$ small enough during the proof. By Theorem \ref{thm:main}, there is $A=A(n,m)$ such that for every $p\in M$, if $\delta R\leq \nu(n,m)$, then
\begin{equation}\label{eq:width-volume-input}
                 \Vol_g B_R(p)\leq A R^m.
\end{equation}
Fix $x\in M$ and $r\geq2$ a large enough integer, only depending on $n,m,v_0$ (see below).  In the
closed annulus $\overline B_{2r}(x)\setminus B_r(x)$ choose a maximal
family of points $p_1,\ldots,p_N$ for which the open balls
$B_{1/10}(p_i)$ are pairwise disjoint.  Bishop--Gromov comparison and
the noncollapsing assumption give
\[
 \Vol_g B_{1/10}(p_i)
 \geq c(n)\Vol_g B_1(p_i)
 \geq c(n)v_0.
\]
All these balls lie in $B_{2r+1/10}(x)$; hence, if $\delta$ is chosen sufficiently small, $\delta(2r+1/10)\leq \nu(n,m)$ and
\eqref{eq:width-volume-input}, together with the previous inequality, implies $
                         N\leq C_0(n,m,v_0)r^m$. 

By maximality, the balls $B_{1/5}(p_i)$ cover the annulus.  Consider
the distance spheres $\partial B_j(x)$ for $ j\in \mathbb N\cap(r,2r)$.
Each open $1/5$-ball meets at most one of these unit-spaced spheres.
Since $N\leq C_0 r^m$, there is $j\in(r,2r)$ for which $\partial B_j(x)$ is
covered by at most $C_1(n,m,v_0)r^{m-1}$ balls of radius $1/5$.  Denoting by $\mathrm{HC}$ the Hausdorff content, we thus have
\begin{equation}\label{eq:width-boundary-content}
       \operatorname{HC}_m\bigl(\partial B_j(x)\bigr)
       \leq C_2(n,m,v_0)r^{m-1}.
\end{equation}

Let $\varepsilon_{m+1}>0$ be the constant in
\cite[Theorem~3.3]{Papasoglu}. The choice of $r=r(n,m,v_0)\geq2$ above should be made in such a way
that the right-hand side of \eqref{eq:width-boundary-content} is $\leq \varepsilon_{m+1}r^m$ (and then, accordingly, $\delta$ must be chosen small with respect to this choice of $r$).  For each $x$, set $U_x=B_j(x)$, choosing
$j=j(x)$ as above.  Then
\[
 B_r(x)\subset U_x\subset B_{2r}(x)\subset B_{10r}(x),
 \qquad
 \operatorname{HC}_m(\partial U_x)\leq
 \varepsilon_{m+1}r^m.
\]
Hence \cite[Theorem 3.3]{Papasoglu} applied with parameter $m+1$ gives
$\operatorname{UW}_m(M,g)\leq r$, as desired.
\end{proof}

\begin{proof}[Proof of {Theorem \ref{prop:rationally-inessential}}]
Suppose that $M$ is rationally essential.  By
\cite[Theorem~1.3]{BraunSauer2021}, there is $c(n)>0$ such that the
Riemannian universal cover $(\widetilde M,\widetilde g)$ satisfies
\[
 \sup_{\widetilde p\in\widetilde M}
 \Vol_{\widetilde g}B_R(\widetilde p)>c(n)R^n,
 \qquad\text{for every }R>0.
\]
On the other hand, Theorem~\ref{thm:main} applied to
$(\widetilde M,\widetilde g)$, gives constants $C=C(n,m)$ and
$\nu=\nu(n,m)>0$ such that
\[
 \sup_{\widetilde p\in\widetilde M}
 \Vol_{\widetilde g}B_R(\widetilde p)
 \leq CR^m
 \qquad\text{whenever }\varepsilon R\leq\nu.
\]
Choose $L=L(n,m)$ sufficiently large that $CL^m<c(n)L^n$, and then
choose $\varepsilon=\varepsilon(n,m)>0$ sufficiently small so that
$\varepsilon L\leq\nu$. Hence, taking $R=L$ contradicts the two
previous inequalities.  

Thus $M$ is not rationally essential.  Finally,
an aspherical $M$ has $c_M$ as a homotopy equivalence, hence $M$ is not aspherical as well; and Gromov's
mapping theorem \cite[Section~3.1, Corollary~(B)]{GromovVolumeBounded} gives
$\lVert M\rVert=\lVert(c_M)_*[M]_{\mathbb R}\rVert_1=0$.
\end{proof}

\bibliographystyle{amsplain}
\bibliography{PaperVolumeGrowth}

@article{AndersonTopology,
  author  = {Anderson, Michael T.},
  title   = {On the topology of complete manifolds of non-negative {Ricci} curvature},
  journal = {Topology},
  volume  = {29},
  number  = {1},
  pages   = {41--55},
  year    = {1990},
  doi     = {10.1016/0040-9383(90)90024-E},
  url     = {https://doi.org/10.1016/0040-9383(90)90024-E},
  note    = {\href{https://doi.org/10.1016/0040-9383(90)90024-E}{doi:10.1016/0040-9383(90)90024-E}}
}

@unpublished{AntonelliXu,
  author        = {Antonelli, Gioacchino and Xu, Kai},
  title         = {New spectral {Bishop--Gromov} and {Bonnet--Myers} theorems and applications to isoperimetry},
  year          = {2024},
  eprint        = {2405.08918},
  archivePrefix = {arXiv},
  primaryClass  = {math.DG},
  url           = {https://arxiv.org/abs/2405.08918},
  note          = {Accepted for publication in \emph{J. Eur. Math. Soc. (JEMS)}; \href{https://arxiv.org/abs/2405.08918}{arXiv:2405.08918}}
}

@article{BrendleHirschJohne,
  author  = {Brendle, Simon and Hirsch, Sven and Johne, Florian},
  title   = {A generalization of {Geroch}'s conjecture},
  journal = {Comm. Pure Appl. Math.},
  volume  = {77},
  number  = {1},
  pages   = {441--456},
  year    = {2024},
  doi     = {10.1002/cpa.22137},
  url     = {https://doi.org/10.1002/cpa.22137},
  note    = {\href{https://doi.org/10.1002/cpa.22137}{doi:10.1002/cpa.22137}}
}

@article{CheegerColdingWarped,
  author  = {Cheeger, Jeff and Colding, Tobias H.},
  title   = {Lower bounds on {Ricci} curvature and the almost rigidity of warped products},
  journal = {Ann. of Math. (2)},
  volume  = {144},
  number  = {1},
  pages   = {189--237},
  year    = {1996},
  doi     = {10.2307/2118589},
  url     = {https://doi.org/10.2307/2118589},
  note    = {\href{https://doi.org/10.2307/2118589}{doi:10.2307/2118589}}
}

@article{CheegerColdingI,
  author  = {Cheeger, Jeff and Colding, Tobias H.},
  title   = {On the structure of spaces with {Ricci} curvature bounded below. {I}},
  journal = {J. Differential Geom.},
  volume  = {46},
  number  = {3},
  pages   = {406--480},
  year    = {1997},
  doi     = {10.4310/jdg/1214459974},
  url     = {https://doi.org/10.4310/jdg/1214459974},
  note    = {\href{https://doi.org/10.4310/jdg/1214459974}{doi:10.4310/jdg/1214459974}}
}

@article{CheegerNaber,
  author  = {Cheeger, Jeff and Naber, Aaron},
  title   = {Regularity of {Einstein} manifolds and the codimension $4$ conjecture},
  journal = {Ann. of Math. (2)},
  volume  = {182},
  number  = {3},
  pages   = {1093--1165},
  year    = {2015},
  doi     = {10.4007/annals.2015.182.3.5},
  url     = {https://doi.org/10.4007/annals.2015.182.3.5},
  note    = {\href{https://doi.org/10.4007/annals.2015.182.3.5}{doi:10.4007/annals.2015.182.3.5}}
}

@article{ChodoshLiStryker,
  author  = {Chodosh, Otis and Li, Chao and Stryker, Douglas},
  title   = {Volume growth of $3$-manifolds with scalar curvature lower bounds},
  journal = {Proc. Amer. Math. Soc.},
  volume  = {151},
  number  = {10},
  pages   = {4501--4511},
  year    = {2023},
  doi     = {10.1090/proc/16521},
  url     = {https://doi.org/10.1090/proc/16521},
  note    = {\href{https://doi.org/10.1090/proc/16521}{doi:10.1090/proc/16521}}
}

@article{CucinottaMondino,
  author  = {Cucinotta, Alessandro and Mondino, Andrea},
  title   = {On manifolds with almost non-negative {Ricci} curvature and integrally-positive {$k^{\mathrm{th}}$}-scalar curvature},
  journal = {Math. Ann.},
  volume  = {394},
  number  = {2},
  pages   = {Paper No.~49},
  year    = {2026},
  doi     = {10.1007/s00208-026-03406-8},
  url     = {https://doi.org/10.1007/s00208-026-03406-8},
  note    = {\href{https://doi.org/10.1007/s00208-026-03406-8}{doi:10.1007/s00208-026-03406-8}}
}

@article{GigliSplitting,
  author        = {Gigli, Nicola},
  title         = {The splitting theorem in non-smooth context},
  journal       = {Mem. Amer. Math. Soc.},
  volume        = {317},
  number        = {1609},
  pages         = {117 pp.},
  year          = {2026},
  doi           = {10.1090/memo/1609},
  eprint        = {1302.5555},
  archivePrefix = {arXiv},
  primaryClass  = {math.MG},
  url           = {https://doi.org/10.1090/memo/1609},
  note          = {\href{https://doi.org/10.1090/memo/1609}{doi:10.1090/memo/1609}; \href{https://arxiv.org/abs/1302.5555}{arXiv:1302.5555}}
}

@inproceedings{GromovLarge,
  author    = {Gromov, Mikhail},
  title     = {Large {Riemannian} manifolds},
  booktitle = {Curvature and Topology of Riemannian Manifolds (Katata, 1985)},
  editor    = {Shiohama, Katsuhiro and Sakai, Takashi and Sunada, Toshikazu},
  series    = {Lecture Notes in Mathematics},
  volume    = {1201},
  pages     = {108--121},
  publisher = {Springer-Verlag},
  address   = {Berlin},
  year      = {1986},
  doi       = {10.1007/BFb0075649},
  url       = {https://doi.org/10.1007/BFb0075649},
  note      = {\href{https://doi.org/10.1007/BFb0075649}{doi:10.1007/BFb0075649}}
}

@unpublished{HuangHuangSlowGrowth,
  author        = {Huang, Hongzhi and Huang, Xian-Tao},
  title         = {Splitting and slow volume growth for open manifolds with nonnegative {Ricci} curvature},
  year          = {2025},
  eprint        = {2510.22708},
  archivePrefix = {arXiv},
  primaryClass  = {math.DG},
  url           = {https://arxiv.org/abs/2510.22708v2},
  note          = {Revised preprint; \href{https://arxiv.org/abs/2510.22708v2}{arXiv:2510.22708v2}}
}

@article{HuangLiu,
  author  = {Huang, Xian-Tao and Liu, Shuai},
  title   = {Optimal asymptotic volume ratio for noncompact $3$-manifolds with asymptotically nonnegative {Ricci} curvature and a uniformly positive scalar curvature lower bound},
  journal = {J. Geom. Anal.},
  volume  = {35},
  number  = {3},
  pages   = {Paper No.~78, 21 pp.},
  year    = {2025},
  doi     = {10.1007/s12220-025-01919-3},
  url     = {https://doi.org/10.1007/s12220-025-01919-3},
  note    = {\href{https://doi.org/10.1007/s12220-025-01919-3}{doi:10.1007/s12220-025-01919-3}}
}

@phdthesis{Jansen,
  author        = {Jansen, Dorothea},
  title         = {Existence of typical scales for manifolds with lower {Ricci} curvature bound},
  school        = {Westf\"alische Wilhelms-Universit\"at M\"unster},
  year          = {2016},
  eprint        = {1703.09590},
  archivePrefix = {arXiv},
  primaryClass  = {math.DG},
  url           = {https://arxiv.org/abs/1703.09590},
  note          = { \href{https://arxiv.org/abs/1703.09590}{arXiv:1703.09590}}
}

@unpublished{KapovitchWilking,
  author        = {Kapovitch, Vitali and Wilking, Burkhard},
  title         = {Structure of fundamental groups of manifolds with {Ricci} curvature bounded below},
  year          = {2011},
  eprint        = {1105.5955},
  archivePrefix = {arXiv},
  primaryClass  = {math.DG},
  url           = {https://arxiv.org/abs/1105.5955v2},
  note          = {Revised preprint; \href{https://arxiv.org/abs/1105.5955v2}{arXiv:1105.5955v2}}
}

@article{Labbi,
  author        = {Labbi, Mohammed-Larbi},
  title         = {On {Weitzenb\"ock} curvature operators},
  journal       = {Math. Nachr.},
  volume        = {288},
  number        = {4},
  pages         = {402--411},
  year          = {2015},
  doi           = {10.1002/mana.201300352},
  eprint        = {math/0607521},
  archivePrefix = {arXiv},
  primaryClass  = {math.DG},
  url           = {https://doi.org/10.1002/mana.201300352},
  note          = {\href{https://doi.org/10.1002/mana.201300352}{doi:10.1002/mana.201300352}; \href{https://arxiv.org/abs/math/0607521}{arXiv:math/0607521}}
}

@article{GuthLargeBalls,
  author        = {Guth, Larry},
  title         = {Volumes of Balls in Large {R}iemannian Manifolds},
  journal       = {Ann. of Math. (2)},
  volume        = {173},
  number        = {1},
  pages         = {51--76},
  year          = {2011},
  doi           = {10.4007/annals.2011.173.1.2},
  eprint        = {math/0610212},
  archivePrefix = {arXiv},
  primaryClass  = {math.DG},
  mrnumber      = {2753599},
  url           = {https://doi.org/10.4007/annals.2011.173.1.2},
  note          = {\href{https://doi.org/10.4007/annals.2011.173.1.2}
                   {doi:10.4007/annals.2011.173.1.2};
                   \href{https://arxiv.org/abs/math/0610212}
                   {arXiv:math/0610212}}
}

@article{GuthUrysohnWidth,
  author        = {Guth, Larry},
  title         = {Volumes of Balls in {R}iemannian Manifolds and
                   {Uryson} Width},
  journal       = {J. Topol. Anal.},
  volume        = {9},
  number        = {2},
  pages         = {195--219},
  year          = {2017},
  doi           = {10.1142/S1793525317500029},
  eprint        = {1504.07886},
  archivePrefix = {arXiv},
  primaryClass  = {math.DG},
  mrnumber      = {3622232},
  url           = {https://doi.org/10.1142/S1793525317500029},
  note          = {\href{https://doi.org/10.1142/S1793525317500029}
                   {doi:10.1142/S1793525317500029};
                   \href{https://arxiv.org/abs/1504.07886}
                   {arXiv:1504.07886}}
}

@article{NabutovskyWidth,
  author        = {Nabutovsky, Alexander},
  title         = {Linear Bounds for Constants in {Gromov}'s Systolic
                   Inequality and Related Results},
  journal       = {Geom. Topol.},
  volume        = {26},
  number        = {7},
  pages         = {3123--3142},
  year          = {2022},
  doi           = {10.2140/gt.2022.26.3123},
  eprint        = {1909.12225},
  archivePrefix = {arXiv},
  primaryClass  = {math.MG},
  mrnumber      = {4540902},
  url           = {https://doi.org/10.2140/gt.2022.26.3123},
  note          = {\href{https://doi.org/10.2140/gt.2022.26.3123}
                   {doi:10.2140/gt.2022.26.3123};
                   \href{https://arxiv.org/abs/1909.12225}
                   {arXiv:1909.12225}}
}

@misc{Koirala2026,
  author        = {Koirala, Robert},
  title         = {Volume Growth under Positive Intermediate Curvature
                   and a {R}icci Lower Bound},
  year          = {2026},
  eprint        = {2608.17977},
  archivePrefix = {arXiv},
  primaryClass  = {math.DG},
  doi           = {10.48550/arXiv.2608.17977},
  url           = {https://arxiv.org/abs/2608.17977},
  note          = {\href{https://doi.org/10.48550/arXiv.2608.17977}
                   {doi:10.48550/arXiv.2608.17977};
                   \href{https://arxiv.org/abs/2608.17977}
                   {arXiv:2608.17977}}
}

@article{MunteanuWangIntegral,
  author        = {Munteanu, Ovidiu and Wang, Jiaping},
  title         = {Sharp Integral Bound of Scalar Curvature on
                   {$3$}-Manifolds},
  journal       = {Trans. Amer. Math. Soc.},
  year          = {2026},
  doi           = {10.1090/tran/9744},
  eprint        = {2505.10520},
  archivePrefix = {arXiv},
  primaryClass  = {math.DG},
  url           = {https://doi.org/10.1090/tran/9744},
  note          = {\href{https://doi.org/10.1090/tran/9744}
                   {doi:10.1090/tran/9744};
                   \href{https://arxiv.org/abs/2505.10520}
                   {arXiv:2505.10520}}
}

@article{ChodoshLiAspherical,
  author        = {Chodosh, Otis and Li, Chao},
  title         = {Generalized Soap Bubbles and the Topology of Manifolds
                   with Positive Scalar Curvature},
  journal       = {Ann. of Math. (2)},
  volume        = {199},
  number        = {2},
  pages         = {707--740},
  year          = {2024},
  doi           = {10.4007/annals.2024.199.2.3},
  mrnumber      = {4713021},
  eprint        = {2008.11888},
  archivePrefix = {arXiv},
  primaryClass  = {math.DG},
  url           = {https://doi.org/10.4007/annals.2024.199.2.3},
  note          = {\href{https://doi.org/10.4007/annals.2024.199.2.3}
                   {doi:10.4007/annals.2024.199.2.3};
                   \href{https://arxiv.org/abs/2008.11888}
                   {arXiv:2008.11888}}
}

@unpublished{GromovAspherical5,
  author        = {Gromov, Mikhail},
  title         = {No Metrics with Positive Scalar Curvatures on
                   Aspherical 5-Manifolds},
  year          = {2020},
  eprint        = {2009.05332},
  archivePrefix = {arXiv},
  primaryClass  = {math.DG},
  doi           = {10.48550/arXiv.2009.05332},
  url           = {https://arxiv.org/abs/2009.05332},
  note          = {\href{https://arxiv.org/abs/2009.05332}
                   {arXiv:2009.05332}}
}

@incollection{LiokumovichMaximo,
  author        = {Liokumovich, Yevgeny and Maximo, Davi},
  title         = {Waist inequality for $3$-manifolds with positive scalar curvature},
  booktitle     = {Perspectives in Scalar Curvature},
  editor        = {Gromov, Mikhail L. and Lawson, Jr., H. Blaine},
  volume        = {2},
  pages         = {799--831},
  publisher     = {World Scientific Publishing},
  address       = {Hackensack, NJ},
  year          = {2023},
  doi           = {10.1142/9789811273230_0022},
  eprint        = {2012.12478},
  archivePrefix = {arXiv},
  primaryClass  = {math.DG},
  url           = {https://doi.org/10.1142/9789811273230_0022},
  note          = {MR~4577931; \href{https://doi.org/10.1142/9789811273230_0022}{doi:10.1142/9789811273230\_0022}; \href{https://arxiv.org/abs/2012.12478}{arXiv:2012.12478}}
}

@article{LiokumovichWang,
  author        = {Liokumovich, Yevgeny and Wang, Zhichao},
  title         = {On the waist and width inequality in complete $3$-manifolds with positive scalar curvature},
  journal       = {Cambridge J. Math.},
  volume        = {14},
  number        = {2},
  pages         = {349--375},
  year          = {2026},
  doi           = {10.4310/CJM.260514230515},
  eprint        = {2308.04044},
  archivePrefix = {arXiv},
  primaryClass  = {math.DG},
  url           = {https://doi.org/10.4310/CJM.260514230515},
  note          = {\href{https://doi.org/10.4310/CJM.260514230515}{doi:10.4310/CJM.260514230515}; \href{https://arxiv.org/abs/2308.04044}{arXiv:2308.04044}}
}

@article{MunteanuWang,
  author  = {Munteanu, Ovidiu and Wang, Jiaping},
  title   = {Geometry of three-dimensional manifolds with positive scalar curvature},
  journal = {Amer. J. Math.},
  volume  = {148},
  number  = {1},
  pages   = {131--160},
  year    = {2026},
  doi     = {10.1353/ajm.2026.a980770},
  url     = {https://doi.org/10.1353/ajm.2026.a980770},
  note    = {\href{https://doi.org/10.1353/ajm.2026.a980770}{doi:10.1353/ajm.2026.a980770}}
}

@article{Papasoglu,
  author  = {Papasoglu, Panos},
  title   = {Uryson width and volume},
  journal = {Geom. Funct. Anal.},
  volume  = {30},
  number  = {2},
  pages   = {574--587},
  year    = {2020},
  doi     = {10.1007/s00039-020-00533-5},
  url     = {https://doi.org/10.1007/s00039-020-00533-5},
  note    = {\href{https://doi.org/10.1007/s00039-020-00533-5}{doi:10.1007/s00039-020-00533-5}}
}

@article{Petrunin,
  author  = {Petrunin, Anton M.},
  title   = {An upper bound for the curvature integral},
  journal = {St. Petersburg Math. J.},
  volume  = {20},
  number  = {2},
  pages   = {255--265},
  year    = {2009},
  doi     = {10.1090/S1061-0022-09-01046-2},
  url     = {https://doi.org/10.1090/S1061-0022-09-01046-2},
  note    = {\href{https://doi.org/10.1090/S1061-0022-09-01046-2}{doi:10.1090/S1061-0022-09-01046-2}}
}

@article{ShenYe,
  author  = {Shen, Ying and Ye, Rugang},
  title   = {On stable minimal surfaces in manifolds of positive bi-{Ricci} curvatures},
  journal = {Duke Math. J.},
  volume  = {85},
  number  = {1},
  pages   = {109--116},
  year    = {1996},
  doi     = {10.1215/S0012-7094-96-08505-1},
  url     = {https://doi.org/10.1215/S0012-7094-96-08505-1},
  note    = {\href{https://doi.org/10.1215/S0012-7094-96-08505-1}{doi:10.1215/S0012-7094-96-08505-1}}
}

@article{YWang,
  author  = {Wang, Yipeng},
  title   = {Effective volume growth of three-manifolds with positive scalar curvature},
  journal = {Proc. Amer. Math. Soc.},
  volume  = {154},
  number  = {1},
  pages   = {329--337},
  year    = {2026},
  doi     = {10.1090/proc/17082},
  url     = {https://doi.org/10.1090/proc/17082},
  note    = {\href{https://doi.org/10.1090/proc/17082}{doi:10.1090/proc/17082}}
}

@article{WangXieZhuZhu,
  author  = {Wang, Jinmin and Xie, Zhizhang and Zhu, Bo and Zhu, Xingyu},
  title   = {Positive scalar curvature meets {Ricci} limit spaces},
  journal = {Manuscripta Math.},
  volume  = {175},
  number  = {3--4},
  pages   = {943--969},
  year    = {2024},
  doi     = {10.1007/s00229-024-01596-6},
  url     = {https://doi.org/10.1007/s00229-024-01596-6},
  note    = {\href{https://doi.org/10.1007/s00229-024-01596-6}{doi:10.1007/s00229-024-01596-6}}
}

@article{WeiXuZhang,
  author  = {Wei, Guodong and Xu, Guoyi and Zhang, Shuai},
  title   = {Volume growth and positive scalar curvature},
  journal = {Trans. Amer. Math. Soc.},
  volume  = {378},
  number  = {9},
  pages   = {6109--6136},
  year    = {2025},
  doi     = {10.1090/tran/9280},
  url     = {https://doi.org/10.1090/tran/9280},
  note    = {\href{https://doi.org/10.1090/tran/9280}{doi:10.1090/tran/9280}}
}

@incollection{YauProblems,
  author    = {Yau, Shing-Tung},
  title     = {Open problems in geometry},
  booktitle = {{S. S. Chern}: A Great Geometer of the Twentieth Century},
  editor    = {Yau, Shing-Tung},
  series    = {Monographs in Geometry and Topology},
  publisher = {International Press},
  address   = {Hong Kong},
  pages     = {275--319},
  year      = {1992},
  isbn      = {962-7670-02-2},
  note      = {See Section~I, p.~278, Problem~9}
}

@article{ZhouZhu,
  author  = {Zhou, Jie and Zhu, Jintian},
  title   = {Optimal volume bound and volume growth for {Ricci}-nonnegative manifolds with positive bi-{Ricci} curvature},
  journal = {J. Reine Angew. Math.},
  volume  = {821},
  pages   = {1--21},
  year    = {2025},
  doi     = {10.1515/crelle-2024-0100},
  url     = {https://doi.org/10.1515/crelle-2024-0100},
  note    = {\href{https://doi.org/10.1515/crelle-2024-0100}{doi:10.1515/crelle-2024-0100}}
}

@article{BZhu,
  author  = {Zhu, Bo},
  title   = {Geometry of positive scalar curvature on complete manifold},
  journal = {J. Reine Angew. Math.},
  volume  = {791},
  pages   = {225--246},
  year    = {2022},
  doi     = {10.1515/crelle-2022-0049},
  url     = {https://doi.org/10.1515/crelle-2022-0049},
  note    = {\href{https://doi.org/10.1515/crelle-2022-0049}{doi:10.1515/crelle-2022-0049}}
}

@article{BZhuXZhu,
  author  = {Zhu, Bo and Zhu, Xingyu},
  title   = {Optimal diameter estimate of three-dimensional {Ricci} limit spaces},
  journal = {Proc. Amer. Math. Soc.},
  volume  = {152},
  number  = {2},
  pages   = {815--821},
  year    = {2024},
  doi     = {10.1090/proc/16529},
  url     = {https://doi.org/10.1090/proc/16529},
  note    = {\href{https://doi.org/10.1090/proc/16529}{doi:10.1090/proc/16529}}
}

@article{XZhu,
  author  = {Zhu, Xingyu},
  title   = {Two-dimension vanishing, splitting and positive scalar curvature},
  journal = {Math. Ann.},
  volume  = {392},
  number  = {2},
  pages   = {2635--2656},
  year    = {2025},
  doi     = {10.1007/s00208-025-03139-0},
  url     = {https://doi.org/10.1007/s00208-025-03139-0},
  note    = {\href{https://doi.org/10.1007/s00208-025-03139-0}{doi:10.1007/s00208-025-03139-0}}
}

@article{BraunSauer2021,
  author        = {Braun, Sabine and Sauer, Roman},
  title         = {Volume and macroscopic scalar curvature},
  journal       = {Geom. Funct. Anal.},
  volume        = {31},
  number        = {6},
  pages         = {1321--1376},
  year          = {2021},
  doi           = {10.1007/s00039-021-00588-y},
  eprint        = {2012.08999},
  archivePrefix = {arXiv},
  primaryClass  = {math.DG},
  url           = {https://doi.org/10.1007/s00039-021-00588-y},
  note          = {\href{https://doi.org/10.1007/s00039-021-00588-y}{doi:10.1007/s00039-021-00588-y}; \href{https://arxiv.org/abs/2012.08999}{arXiv:2012.08999}}
}

@article{CheegerGromoll,
  author  = {Cheeger, Jeff and Gromoll, Detlef},
  title   = {The splitting theorem for manifolds of nonnegative
             {R}icci curvature},
  journal = {J. Differential Geom.},
  volume  = {6},
  number  = {1},
  pages   = {119--128},
  year    = {1971},
  doi     = {10.4310/jdg/1214430220},
  url     = {https://doi.org/10.4310/jdg/1214430220},
  note    = {\href{https://doi.org/10.4310/jdg/1214430220}{doi:10.4310/jdg/1214430220}}
}

@unpublished{GeGromov,
  author        = {Ge, Jian},
  title         = {Heat kernel geometry and {Gromov}'s volume growth conjecture},
  year          = {2026},
  eprint        = {2608.13553},
  archivePrefix = {arXiv},
  primaryClass  = {math.DG},
  doi           = {10.48550/arXiv.2608.13553},
  url           = {https://arxiv.org/abs/2608.13553},
  note          = {\href{https://arxiv.org/abs/2608.13553}{arXiv:2608.13553}}
}

@article{GromovVolumeBounded,
  author  = {Gromov, Mikhail},
  title   = {Volume and bounded cohomology},
  journal = {Publ. Math. Inst. Hautes \'Etudes Sci.},
  volume  = {56},
  pages   = {5--99},
  year    = {1982},
  url     = {https://www.numdam.org/item/PMIHES_1982__56__5_0/},
  note    = {\href{https://www.numdam.org/item/PMIHES_1982__56__5_0/}{Numdam}}
}

@inproceedings{GuthMacroscopic,
  author        = {Guth, Larry},
  title         = {Metaphors in systolic geometry},
  booktitle     = {Proceedings of the International Congress of
                   Mathematicians. Volume II},
  pages         = {745--768},
  publisher     = {Hindustan Book Agency},
  address       = {New Delhi},
  year          = {2010},
  doi           = {10.1142/9789814324359_0072},
  eprint        = {1003.4247},
  archivePrefix = {arXiv},
  primaryClass  = {math.DG},
  mrnumber      = {2827817},
  url           = {https://doi.org/10.1142/9789814324359_0072},
  note          = {\href{https://doi.org/10.1142/9789814324359_0072}{doi:10.1142/9789814324359\_0072}; \href{https://arxiv.org/abs/1003.4247}{arXiv:1003.4247}}
}

@misc{KZ,
  author        = {Kong, Bochao and Zhu, Xingyu},
  title         = {Positive Scalar Curvature and Volume Growth},
  year          = {2026},
  eprint        = {2608.14438},
  archivePrefix = {arXiv},
  primaryClass  = {math.DG},
  doi           = {10.48550/arXiv.2608.14438},
  url           = {https://arxiv.org/abs/2608.14438},
  note          = {\href{https://arxiv.org/abs/2608.14438}{arXiv:2608.14438}}
}

@unpublished{KumarSen,
  author        = {Kumar, Aditya and Sen, Balarka},
  title         = {{Urysohn} width and macroscopic scalar curvature},
  year          = {2026},
  eprint        = {2601.14669},
  archivePrefix = {arXiv},
  primaryClass  = {math.DG},
  doi           = {10.48550/arXiv.2601.14669},
  url           = {https://arxiv.org/abs/2601.14669},
  note          = {\href{https://arxiv.org/abs/2601.14669}{arXiv:2601.14669}}
}

@article{Lott2025,
  author        = {Lott, John},
  title         = {Some obstructions to positive scalar curvature on a noncompact manifold},
  journal       = {J. Reine Angew. Math.},
  volume        = {829},
  pages         = {247--283},
  year          = {2025},
  doi           = {10.1515/crelle-2025-0072},
  eprint        = {2402.13239},
  archivePrefix = {arXiv},
  primaryClass  = {math.DG},
  url           = {https://doi.org/10.1515/crelle-2025-0072},
  note          = {\href{https://doi.org/10.1515/crelle-2025-0072}{doi:10.1515/crelle-2025-0072}; \href{https://arxiv.org/abs/2402.13239}{arXiv:2402.13239}}
}

@article{LiokumovichLishakNabutovskyRotman,
  author        = {Liokumovich, Yevgeny and Lishak, Boris and
                   Nabutovsky, Alexander and Rotman, Regina},
  title         = {Filling metric spaces},
  journal       = {Duke Math. J.},
  volume        = {171},
  number        = {3},
  pages         = {595--632},
  year          = {2022},
  doi           = {10.1215/00127094-2021-0039},
  eprint        = {1905.06522},
  archivePrefix = {arXiv},
  primaryClass  = {math.DG},
  url           = {https://doi.org/10.1215/00127094-2021-0039},
  note          = {\href{https://doi.org/10.1215/00127094-2021-0039}{doi:10.1215/00127094-2021-0039}; \href{https://arxiv.org/abs/1905.06522}{arXiv:1905.06522}}
}

@incollection{GromovFourLectures,
  author        = {Gromov, Mikhail},
  title         = {Four Lectures on Scalar Curvature},
  booktitle     = {Perspectives in Scalar Curvature},
  editor        = {Gromov, Mikhail L. and Lawson, Jr., H. Blaine},
  volume        = {1},
  pages         = {1--514},
  publisher     = {World Scientific Publishing Co. Pte. Ltd.},
  address       = {Hackensack, NJ},
  year          = {2023},
  doi           = {10.1142/9789811273223_0001},
  eprint        = {1908.10612},
  archivePrefix = {arXiv},
  primaryClass  = {math.DG},
  mrnumber      = {4577903},
  url           = {https://doi.org/10.1142/9789811273223_0001},
  note          = {\href{https://doi.org/10.1142/9789811273223_0001}{doi:10.1142/9789811273223\_0001}; \href{https://arxiv.org/abs/1908.10612}{arXiv:1908.10612}}
}

\end{document}